\theoremstyle{definition}
\newtheorem{definition}{Definition}[subsection]
\newtheorem{theorem}{Theorem}[subsection]
\newtheorem{lemma}{Lemma}[subsection]
\newtheorem{note}{Note}[subsection]
\newtheorem{remark}{Remark}[subsection]
\title{Quantization of Cantor-like set on the real projective line}
\author{A. Hossain}
\address{Department of Mathematics, Presidency University, 86/1, College Street, Kolkata, 700 073, West Bengal, India}
\curraddr{}
\email{hossain4791@gmail.com}
\author{A. Banerjee}
\address{Department of Mathematics, Presidency University, 86/1, College Street, Kolkata, 700 073, West Bengal, India}
\email{akash.mapping@gmail.com}
\author{Md. N. Akhtar}
\address{Department of Mathematics, Presidency University, 86/1, College Street, Kolkata, 700 073, West Bengal, India}
\email{nasim.iitm@gmail.com}
\begin{document}
	\pagestyle{plain}
	\maketitle
	\begin{abstract}
		In this article, an iterated function system (IFS) is considered on the real projective line $\mathbb{RP}^1$ so that the attractor is a Cantor-like set. Hausdorff dimension of this attractor is estimated. The existence of a probability measure associated with this IFS on $\mathbb{RP}^1$ is also demonstrated. It is shown that the $n$-th quantization error of order $r$ for the push-forward measure is a constant multiple of the $n$-th quantization error of order $r$ of the original measure. Finally, an upper bound for the $n$-th quantization error of order $2$ for this measure is provided.
	\end{abstract}
	{\bf Keywords:} Real projective line, Real projective iterated function system, Probability measure, Quantization error.\\
	
	\textbf{MSC Classification}  28A80, 37A50 
\section{Introduction}
Given a probability measure on a measurable space, the quantization process involves finding a discrete set of points in the space, each point associated with a probability, such that the resulting discrete probability measure is the close approximation of the original one. Quantization theory has broad applications in signal processing, telecommunications, data compression, image processing, and cluster analysis (see \cite{Gersho2012,Gray1980,Gray1998,Zamir2014}).
Quantization error is the difference between the continuous probability measure and its discretized representation. Graf-Luschgy \cite{graf2007foundations}, studied the $n$-th quantization error for an invariant probability measure. Also, Roychowdhury \cite{roychowdhury2011quantization}, estimated the quantization dimension for the self-similar measure using the quantization error of this measure. Most of the authors studied the quantization theory in  Euclidean spaces \cite{graf1997,roychowdhury2012quantization}. Recently, Barnsley et al. \cite{Barnsley2012}, introduced the concept of real projective iterated function system (RPIFS) on the real projective space and demonstrated the existence of the attractor of this RPIFS.  Barany et al. \cite{Barany2012} studied the  Furstenberg measure generated by the RPIFS, which plays an important role in information theory. The authors used the Lyapunov exponents to determine the upper bound for the Hausdorff dimension of the Furstenberg measure on the real projective space. Also, Jurga et al. \cite{Jurga2020}, studied the dimension of the attractor of an iterated function system induced by the projective action on the real projective line. In particular, they generalized a recent result of Solomyak and Takahashi \cite{Solomyak2021} by showing that the Hausdorff dimension of the attractor is given by the minimum of 1 and the critical exponent.\\

The objective of this article is to explore an Iterated Function System (IFS) operating on the real projective line and investigate the quantization theory concerning the probability measure associated with this IFS. To achieve this, we consider an RPIFS on the real projective line $\mathbb{RP}^1$ so that it has an attractor. Then, employing methodologies outlined in prior works such as \cite{Jurga2020,Solomyak2021}, we estimate the Hausdorff dimension of this attractor. Also, we prove the existence of a probability measure associated with this RPIFS on $\mathbb{RP}^1$. Furthermore, we demonstrate that the quantization error of order $r$ for the push-forward measure is a constant multiple of the quantization error of order $r$ of the original one. We end the article by providing an upper bound of the $n$-th quantization error of order $2$ for this probability measure. 
\section{Preliminaries}
In this section, we reintroduce basic definitions of the real projective line, generating cone, projective metric, and some notations related to this article. For more details, interested readers may consult \cite{Barnsley2012,Solomyak2021,Hossain2023}.
\subsection{Real projective line}
The real projective line, which is denoted by $\mathbb{RP}^1$, is the quotient of the set $\mathbb{R}^2\setminus \{(0,0)\}$ of non-zero vectors by the equivalence relation ``$x\sim y$ if and only if $x=cy$ for some $c\in\mathbb{R}^*$ (non-zero reals)". For $(x,y)\in\mathbb{R}^2\setminus \{(0,0)\}$, the corresponding class element is denoted by $[x:y]$ in $\mathbb{RP}^1$. It may be identified with the line $\mathbb{R}$ extended by a point at infinity. More precisely, the line $\mathbb{R}$ may be identified with the subset of $\mathbb{RP}^1$ given by
\begin{equation*}
	\mathbb{RP}^*=\left\{[x:1]:~x\in\mathbb{R}\right\}.
\end{equation*}
This subset covers all points of $\mathbb{RP}^1$ except one, which is the point at infinity, $\infty:=[1:0]$. Thus 
\begin{equation*}
	\mathbb{RP}^1=\mathbb{RP}^*\cup\{\infty\}.
\end{equation*}
For $[x_1:1],[x_2:1]\in\mathbb{RP}^*$, define
\begin{align*}
	&[x_1:1]\oplus[x_2:1]=[x_1+x_2:1]\\
	&[x_1:1]\star[x_2:1]=[x_1x_2:1]
\end{align*}
and the scalar multiplication of an element $[x:1]\in\mathbb{RP}^*$ with $c\in\mathbb{R}$ is defined by $c\odot[x:1]=[cx:1]$. The difference between two elements $[x_1:1],[x_2:1]\in\mathbb{RP}^*$ is defined by 
\begin{align*}
	[x_1:1]\ominus[x_2:1]=[x_1-x_2:1].
\end{align*} 
\begin{definition}[Projective metric on $\mathbb{RP}^*$]
	For $[x_1:1],[x_2:1]\in\mathbb{RP}^*$, define a metric $d_{\mathbb{P}}$ on $\mathbb{RP}^*$ as follows:
	\begin{equation}
		d_{\mathbb{P}}\big([x_1:1],[x_2:1]\big):=\vert x_1-x_2\vert.
	\end{equation} 
\end{definition}
\begin{definition}[Hyperplane]
	If  $[p],[q]\in\mathbb{RP}^1$ have the homogeneous coordinates $(p_1,p_2)$ and $(q_1,q_2)$ respectively, and $\sum_{k=1}^{{2}}p_kq_k=0$, then we say that $[p]$ is orthogonal to $[q]$, and write $[p]\perp [q]$. A  hyperplane in $\mathbb{RP}^1$ is a set of the form 
	\begin{equation*}
		\mathbb{H}_p=\big\{[q]\in\mathbb{RP}^1:[p]\perp [q]\big\}\subseteq \mathbb{RP}^1
	\end{equation*}
	for some $[p]\in\mathbb{RP}^1$.
\end{definition}
\begin{definition}[see \cite{Barnsley2012}]
	A set $\mathbb{K}\subseteq \mathbb{RP}^1$ is said to avoid a hyperplane if there exists a hyperplane $\mathbb{H}_p\subseteq \mathbb{RP}^1$ such that $\mathbb{H}_p\cap \mathbb{K}=\emptyset$.
\end{definition}
\begin{definition}[Generating cone]
	Let $[x],[y]\in\mathbb{RP}^*$. Then the point $[x],[y]$ generates two line segments in $\mathbb{RP}^1$. We consider the segment that does not intersect the point at infinity $[1:0]$, and we denote it by $C_{\overline{xy}}$, and call it {\bf cone generated by $[x],[y]$} (see Figure \ref{gconfig}). A cone $C$ is said to be {\bf multi-cone} if $C$ is the disjoint union of finite numbers of generating cones. 
\end{definition}
\begin{figure}[ht!]
	\centering
	\includegraphics[width=8cm, height=5cm]{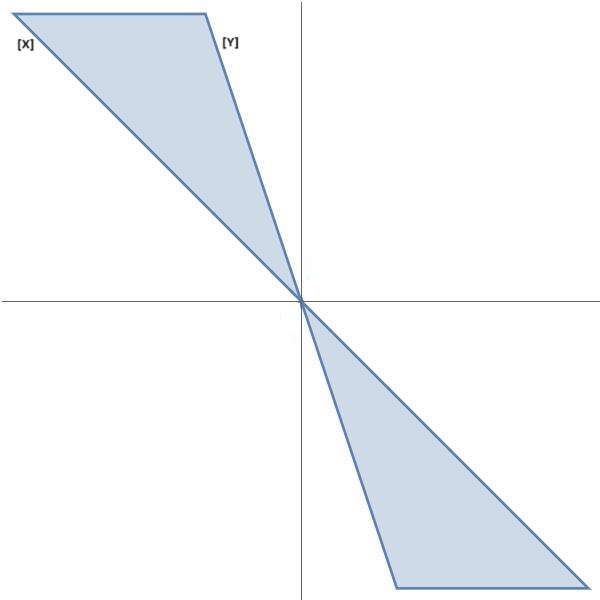}
	\caption{Cone generated by the points $[x],[y]$ in $\mathbb{RP}^*$.} \label{gconfig}
\end{figure}
\begin{definition}[Real projective iterated function system on $\mathbb{RP}^1$]
	Given a finite set $\mathcal{P}\subset GL(2,\mathbb{R})$, the associated RPIFS is denoted by $\mathscr{W}_{\mathcal{P}}=\left\{\mathbb{RP}^1; w_A:A\in\mathcal{P}\right\}$, where the projective transformations $w_A:\mathbb{RP}^1\to\mathbb{RP}^1$ are given by $w_A[x]=[Ax]$. 
\end{definition}
\begin{definition}[Oriented RPIFS]
	The RPIFS $\mathscr{W}_{\mathcal{P}}=\left\{\mathbb{RP}^1; w_A:A\in\mathcal{P}\right\}$ is said to be orientation preserving if  $\mathcal{P}\subset GL^+(2,\mathbb{R})=\left\{A\in GL(2,\mathbb{R}):\det(A)>0\right\}$ or $\mathcal{P}\subset GL^-(2,\mathbb{R})=\left\{A\in GL(2,\mathbb{R}):\det(A)<0\right\}$. We denotes the corresponding RPIFS by $\mathscr{W}_{\mathcal{P}}^+$ or $\mathscr{W}_{\mathcal{P}}^-$ if $\mathcal{P}\subset GL^+(2,\mathbb{R})$ or $\mathcal{P}\subset GL^-(2,\mathbb{R})$ respectively.
\end{definition}
For simplicity, we assume that $\mathcal{P}\subset GL^+(2,\mathbb{R})$. Then the action of $GL^+(2,\mathbb{R})$ factors through the $SL(2,\mathbb{R})$ action, via $A\to \frac{A}{\sqrt{\det(A)}}$. Hence it is enough to work on $SL(2,\mathbb{R})$. 
In the sequel, we assume that $\mathcal{P}\subset SL(2,\mathbb{R})$ and we use the same notation $\mathscr{W}_{\mathcal{P}}^+$ for this case. We denote $\mathcal{P}^n$, as all the products of $n$ matrices from $\mathcal{P}$. Then $\mathcal{P}^*:=\bigcup_{n=1}^{\infty}\mathcal{P}^n$   form a semi-group generated by $\mathcal{P}$. Given a matrix $A=(a_{ij})\in\mathcal{P}$, define
\begin{equation*}
	\lVert A\rVert=\max_{ij}\{\vert a_{ij}\vert\}.
\end{equation*}
\begin{definition}
	Let $\mathcal{P}\subset SL(2,\mathbb{R})$ be finite. Then $\mathcal{P}$ is called {\bf semi-discrete} if $Id\notin \overline{\mathcal{P}^*}$, where the closure is taken over $SL(2,\mathbb{R})$. 
\end{definition}
Let $\mathcal{P}=\left\{A_i:i\in\mathcal{I}\right\}\subset SL(2,\mathbb{R})$, where $\mathcal{I}=\{1,2,\ldots,m\}$. Write $A_{\bf i}=A_{i_1}A_{i_2}\cdots A_{i_n}$ for ${\bf i}=i_1i_2\cdots i_n\in \mathcal{I}^n$.
\begin{definition}
	Let $\mathcal{P}=\left\{A_i:i\in\mathcal{I}\right\}$ be a finite collection of matrices in $SL(2,\mathbb{R})$ and $d$ be a left-invariant Riemannian metric on $SL(2,\mathbb{R})$.  Then the set $\mathcal{P}$ is said to be {\bf Diophantine} if there exists $c>0$ such that for all $n\in\mathbb{N}$, we have
	\begin{equation}
		{\bf i,j} \in \mathcal{I}^n,\quad A_{\bf i}\neq A_{\bf j}\implies	d(A_{\bf i},A_{\bf j})>c^n.
	\end{equation}
	The set $\mathcal{P}$ is said to be {\bf strongly Diophantine} if there exists $c>0$ such that for all $n\in\mathbb{N}$, we have
	\begin{equation}
		{\bf i,j} \in \mathcal{I}^n,\quad {\bf i}\neq {\bf j}\implies	d(A_{\bf i},A_{\bf j})>c^n.
	\end{equation}
\end{definition}
\begin{definition}
	A finite set $\mathcal{P}\subset SL(2,\mathbb{R})$ is called {\bf uniformly hyperbolic} if there exists $\lambda>1$ and a constant $c>0$, such that 
	\begin{equation}
		\lVert A\rVert\geq c\lambda^n,\quad\mbox{for all}~A\in\mathcal{P}^n~\mbox{and}~n\in\mathbb{N}.
	\end{equation}
\end{definition}
The next theorem can be obtained by the results from \cite{avila2010uniformly}, \cite{Barnsley2012} and \cite{Jurga2020}.
\begin{theorem}\label{extofatt}
	The following statements are equivalent.
	\begin{enumerate}
		\item The RPIFS $\mathscr{W}_{\mathcal{P}}$ has an attractor $F_{\mathcal{P}}$ that avoids a hyperplane.
		\item $\mathcal{P}$ is uniformly hyperbolic.
		\item There is a non-empty open set $V\subset\mathbb{RP}^1$ such that $W_{\mathcal{P}}$ is contractive on $\overline{V}$.
		\item There exits a multi-cone $C$ such that  $W_{\mathcal{P}}(\overline{int(C)})\subsetneq int(C)$. 
	\end{enumerate}
\end{theorem}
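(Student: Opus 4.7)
The plan is to close the cycle $(4)\Rightarrow(3)\Rightarrow(1)\Rightarrow(2)\Rightarrow(4)$, which reduces the equivalence to one substantive input from the Avila--Bochi--Yoccoz theory of uniformly hyperbolic $SL(2,\mathbb{R})$-cocycles. The non-trivial direction is $(2)\Rightarrow(4)$, which is exactly their characterization: uniform hyperbolicity of a finite set $\mathcal{P}\subset SL(2,\mathbb{R})$ is equivalent to the existence of a strict invariant multi-cone. The other three implications are essentially formal consequences of standard IFS arguments combined with the contraction properties of projective maps on sets avoiding a hyperplane, and fit within the framework of Barnsley--Vince, while the intermediate step $(3)\Rightarrow(1)$ is a Hutchinson-type fixed point argument.

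For $(4)\Rightarrow(3)$ I would take $V=\operatorname{int}(C)$. Since $W_{\mathcal{P}}(\overline{\operatorname{int}(C)})\subsetneq \operatorname{int}(C)$ and $\overline{\operatorname{int}(C)}$ is compact and avoids the point at infinity $[1:0]$ (as a finite disjoint union of generating cones does), each $w_A$ admits a uniform contraction factor on $\overline{V}$ in the projective metric $d_{\mathbb{P}}$, equivalently in the Hilbert metric on the multi-cone. For $(3)\Rightarrow(1)$, Hutchinson's theorem applied to the complete space $(\overline{V},d_{\mathbb{P}})$ produces a unique non-empty compact attractor $F_{\mathcal{P}}\subset\overline{V}$; since $V$ can be taken to be a proper open subset of $\mathbb{RP}^1$, some hyperplane $\mathbb{H}_p$ is disjoint from $\overline{V}$ and hence from $F_{\mathcal{P}}$.

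For $(1)\Rightarrow(2)$, after a projective change of coordinates I may assume $F_{\mathcal{P}}\subset\mathbb{RP}^*$ lies at a positive distance from the point at infinity. A direct computation of the differential of the projective action in the affine chart $[x:1]\mapsto x$ shows that the contraction ratio of $w_{A_{\mathbf{i}}}$ on $F_{\mathcal{P}}$ is of the same order as $\|A_{\mathbf{i}}\|^{-2}$, with constants depending only on the distance from $F_{\mathcal{P}}$ to the hyperplane. Since the attractor condition guarantees uniform contraction for all iterates, one obtains $\|A_{\mathbf{i}}\|\geq c\lambda^{n}$ for some $\lambda>1$, which is precisely uniform hyperbolicity.

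The main obstacle is the step $(2)\Rightarrow(4)$, which is not formal and requires the full strength of the Avila--Bochi--Yoccoz result. In particular one must verify that their notion of a strict invariant multi-cone for the induced action on $S^1$ descends correctly to the present definition of a multi-cone as a disjoint union of generating cones in $\mathbb{RP}^*$ avoiding $[1:0]$; up to a harmless coordinate change placing the exceptional directions away from the invariant cone field, this identification is exactly what is needed to close the cycle and complete the proof.
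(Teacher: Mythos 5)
Your proposal is correct and follows essentially the same route as the paper, which itself gives no independent argument but assembles the equivalence from Avila--Bochi--Yoccoz (uniform hyperbolicity versus strictly invariant multicones), Barnsley--Vince (attractors avoiding a hyperplane and contractivity on a neighbourhood), and the $\|A\|^{-2}$ derivative estimate used by Jurga et al.; your cycle $(4)\Rightarrow(3)\Rightarrow(1)\Rightarrow(2)\Rightarrow(4)$ is just an explicit organization of those same inputs. The only point to polish is in $(4)\Rightarrow(3)$: the uniform contraction is naturally obtained in the Hilbert (Birkhoff) metric of the multicone rather than directly in $d_{\mathbb{P}}$, which is exactly the sense of ``contractive'' required, so this does not affect correctness.
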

Given a finite or a countable set $\mathcal{P}=\left\{A_i:i\in\mathcal{I}\right\}\subset SL(2,\mathbb{R})$, define the zeta function $\zeta_{\mathcal{P}}:[0,\infty]\to\mathbb{R}\cup\{\infty\}$ by 
\begin{equation}
	\zeta_{\mathcal{P}}(t):=\sum_{n=1}^{\infty}\sum_{{\bf i}\in\mathcal{I}^n}\big(\Vert A_{\bf i}\Vert\big)^{-2t}
\end{equation}
and its critical exponent 
\begin{equation}
	\xi_{\mathcal{P}}:=\inf\left\{t>0:\zeta_{\mathcal{P}}(t)<\infty\right\}.
\end{equation}
If $\zeta_{\mathcal{P}}(t)$ is divergent for all $t\geq 0$, then define $\xi_{\mathcal{P}}=\infty$.
\begin{theorem}[\cite{Jurga2020}]\label{exthd}
	Let $\mathcal{P}=\left\{A_i:i\in\mathcal{I}\right\}$ be a finite collection of matrices in $SL(2,\mathbb{R})$ which is Diophantine and semi-discrete. If the attractor $F_{\mathcal{P}}$ of the corresponding RPIFS $\mathscr{W}_{\mathcal{P}}^+$ is not singleton, then 
	\begin{equation*}
		\dim_H(F_{\mathcal{P}})=\min\{1,\xi_{\mathcal{P}}\},
	\end{equation*}
	where $\dim_H$ denotes the Hausdorff dimension.
\end{theorem}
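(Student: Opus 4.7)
The plan is to establish $\dim_H(F_{\mathcal{P}}) = \min\{1, \xi_{\mathcal{P}}\}$ by proving the upper and lower bounds separately, exploiting the equivalent formulations of uniform hyperbolicity given in Theorem~\ref{extofatt}. The trivial ceiling $\dim_H(F_{\mathcal{P}})\leq 1$ is free since $F_{\mathcal{P}}\subseteq \mathbb{RP}^1$, so the real content is to match $\xi_{\mathcal{P}}$ from above and below.

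For the upper bound, I would start from the invariant multi-cone $C$ supplied by Theorem~\ref{extofatt}(4), which gives $F_{\mathcal{P}}=\bigcap_{n\geq 1}\bigcup_{{\bf i}\in\mathcal{I}^n}w_{A_{\bf i}}(C)$ with $\{w_{A_{\bf i}}(C)\}$ forming a natural level-$n$ cover. The key geometric input is the standard projective-contraction estimate: for any compact set $K\subset\mathbb{RP}^1$ avoiding a fixed hyperplane, there exists $c_K>0$ with $\mathrm{diam}_{d_{\mathbb{P}}}\bigl(w_A(K)\bigr)\leq c_K\,\lVert A\rVert^{-2}$ for every $A\in\mathcal{P}^*$. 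Applied to $\overline{C}$ (which, by Theorem~\ref{extofatt}(1), avoids a hyperplane), this yields $\mathrm{diam}\bigl(w_{A_{\bf i}}(C)\bigr)\lesssim \lVert A_{\bf i}\rVert^{-2}$. The $t$-dimensional Hausdorff premeasure of $F_{\mathcal{P}}$ is therefore dominated by $\sum_{n}\sum_{{\bf i}\in\mathcal{I}^n}\lVert A_{\bf i}\rVert^{-2t}=\zeta_{\mathcal{P}}(t)$, which is finite for every $t>\xi_{\mathcal{P}}$. Hence $\dim_H(F_{\mathcal{P}})\leq \xi_{\mathcal{P}}$.

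For the lower bound, I would construct a sequence of Bernoulli-type measures $\mu_{\bf p}$ on the code space $\mathcal{I}^{\mathbb{N}}$, push them to $F_{\mathcal{P}}$ via the canonical coding map, and bound their local dimensions from below. The natural candidate is a Gibbs measure associated with the sub-additive potential $\Phi_n({\bf i})=-2t\log\lVert A_{\bf i}\rVert$ at $t=\min\{1,\xi_{\mathcal{P}}\}$; uniform hyperbolicity ensures $\Phi_n$ is quasi-multiplicative, so the standard sub-additive thermodynamic formalism produces a measure whose local dimension at $\mu$-a.e.\ point reaches $\min\{1,\xi_{\mathcal{P}}\}$. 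A mass distribution principle then converts this into the required Hausdorff lower bound, provided cylinder images are not crushed together by overlaps. This is where the hypotheses bite: the Diophantine condition replaces the open set condition by guaranteeing $d_{\mathbb{P}}\bigl(w_{A_{\bf i}}(z),w_{A_{\bf j}}(z)\bigr)\gtrsim c^n$ for admissible words producing distinct matrices, while semi-discreteness rules out the pathological case where $\mathcal{P}^*$ accumulates at the identity and collapses the coding, and the non-singleton hypothesis on $F_{\mathcal{P}}$ excludes the trivial zero-dimensional case.

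The main obstacle is precisely this lower bound. In the classical self-similar setting one has an open set condition separating cylinders cleanly, but in the projective setup two distinct words can produce matrices that are extremely close in $SL(2,\mathbb{R})$, creating near-exact overlaps that could a priori shrink the measure-theoretic dimension below the zeta-exponent. The Diophantine hypothesis is the correct quantitative substitute, and making it work requires a careful counting estimate, in the spirit of the transversality arguments of Solomyak--Takahashi, on the number of words ${\bf i}\in\mathcal{I}^n$ whose matrix products lie within a $c^n$-neighbourhood of each other. Balancing this combinatorial count against the exponential growth rate of the zeta sum, and then patching the two bounds, is where the bulk of the technical work lies; the result we quote from \cite{Jurga2020} carries out exactly this programme.
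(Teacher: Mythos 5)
There is nothing in the paper to compare your argument against: Theorem~\ref{exthd} is stated as an imported result, attributed to \cite{Jurga2020}, and the paper offers no proof of it. So the only question is whether your proposal would stand on its own, and as written it does not: it is an accurate architectural description of the known proof rather than a proof. Your upper bound is essentially complete once you justify the diameter estimate $\mathrm{diam}_{d_{\mathbb{P}}}\bigl(w_A(K)\bigr)\lesssim \lVert A\rVert^{-2}$ for compact $K$ avoiding a hyperplane (this follows from the derivative formula for the projective action of an $SL(2,\mathbb{R})$ matrix restricted to a cone bounded away from its singular direction), though you should phrase the covering argument level by level: finiteness of $\zeta_{\mathcal{P}}(t)$ forces the level-$n$ sums $\sum_{{\bf i}\in\mathcal{I}^n}\lVert A_{\bf i}\rVert^{-2t}$ to tend to zero, which is what kills $\mathcal{H}^t(F_{\mathcal{P}})$ for $t>\xi_{\mathcal{P}}$; summing the series over all $n$ is not itself a cover estimate.

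The genuine gap is the lower bound, and you say so yourself. Everything that makes this theorem hard --- constructing the Gibbs measure for the quasi-multiplicative potential $-2t\log\lVert A_{\bf i}\rVert$, and above all the counting/transversality estimate that converts the Diophantine separation $d(A_{\bf i},A_{\bf j})>c^n$ into control on how many cylinder images can pile up in a ball of a given radius --- is named but not executed, and your final sentence explicitly delegates it to \cite{Jurga2020}. That is a reasonable thing to do in a paper (it is exactly what the authors do), but it means your text is a citation with commentary, not an independent proof. If the goal were a self-contained argument, the missing piece is precisely that combinatorial estimate; without it the mass distribution principle cannot be applied, because near-exact overlaps of distinct words could concentrate the measure and push the local dimension below $\xi_{\mathcal{P}}$.
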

\begin{definition}
	A set $\mathcal{P}\subset SL(2,\mathbb{R})$ is said to be {\bf strongly irreducible}, if each map in $\mathscr{W}_{\mathcal{P}}^+$ does not preserve any finite subset of $\mathbb{RP}^1$. A set $\mathcal{P}\subset SL(2,\mathbb{R})$ is said to be {\bf irreducible} if each map in $\mathscr{W}_{\mathcal{P}}^+$ does not have a common fixed point in $\mathbb{RP}^1$.
\end{definition}
Given a finite set $\mathcal{P}=\left\{A_i:i\in\mathcal{I}\right\}\subset SL(2,\mathbb{R})$ and non-degenerate probability vector $(p_i)_{i\in\mathcal{I}}$, one can consider the probability measure $\mu$ on $SL(2,\mathbb{R})$ whose support is $\mathcal{P}$ as follows:
\begin{equation}
	\mu =\sum_{ i\in I}p_i\mathcal{X}_{A_i},
\end{equation}
where $\mathcal{X}_{A}$ denotes the characteristic function on $SL(2,\mathbb{R})$.
\begin{theorem}[\cite{Furstenberg1963,Hutchinson1981}]\label{exopbb}
	If $\mathcal{P}=\left\{A_i:i\in\mathcal{I}\right\}\subset SL(2,\mathbb{R})$ is strongly irreducible and generates an unbounded semi-group, then there exists a unique probability measure $P$ on $\mathbb{RP}^1$ such that 
	\begin{equation*}
		P=\sum_{ i\in\mathcal{I}}p_iP\circ A_i^{-1}.
	\end{equation*}
\end{theorem}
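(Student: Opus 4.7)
The plan is to treat existence and uniqueness separately. Define the Markov operator $T$ on the space $\mathcal{M}_1(\mathbb{RP}^1)$ of Borel probability measures on $\mathbb{RP}^1$ by $T\nu = \sum_{i\in\mathcal{I}} p_i\,(\nu \circ A_i^{-1})$, so that a fixed point of $T$ is precisely a measure $P$ satisfying the desired self-referential equation. Note that each $A_i$ acts as a homeomorphism of $\mathbb{RP}^1$, hence each $\nu\circ A_i^{-1}$ is a Borel probability measure and a convex combination with the weights $p_i$ remains one; so $T$ is well-defined.

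For existence, I would exploit compactness. Since $\mathbb{RP}^1$ is compact Hausdorff, the Banach-Alaoglu theorem makes $\mathcal{M}_1(\mathbb{RP}^1)$ compact in the weak-$*$ topology, and it is plainly convex. The operator $T$ is affine, and its weak-$*$ continuity is an immediate consequence of the continuity of each $A_i$ applied to test functions $f\in C(\mathbb{RP}^1)$ via change of variables. The Schauder-Tychonoff (equivalently, Markov-Kakutani) fixed point theorem then yields some $P\in\mathcal{M}_1(\mathbb{RP}^1)$ with $TP=P$, giving the existence part. This step does not require strong irreducibility or unboundedness.

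For uniqueness, I would follow Furstenberg's boundary approach. Endow the one-sided shift $\Omega=\mathcal{I}^{\mathbb{N}}$ with the Bernoulli measure $\mathbb{P} = \prod_{n\ge 1}(p_i)_{i\in\mathcal{I}}$, and for $\omega=(\omega_1,\omega_2,\ldots)\in\Omega$ set $B_n(\omega)=A_{\omega_1}A_{\omega_2}\cdots A_{\omega_n}$. Unboundedness of the semi-group generated by $\mathcal{P}$ together with strong irreducibility places us in the hypotheses of Furstenberg's theorem, which gives that the top Lyapunov exponent $\gamma=\lim_n\tfrac{1}{n}\log\lVert B_n\rVert$ is strictly positive $\mathbb{P}$-almost surely. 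A consequence of positive Lyapunov exponent in the $SL(2,\mathbb{R})$ setting is a contraction-on-average property: $\mathbb{P}$-a.s. there is a random point $Z(\omega)\in\mathbb{RP}^1$ such that for every $\nu\in\mathcal{M}_1(\mathbb{RP}^1)$ not charging the exceptional hyperplane associated with $Z(\omega)$, the pushforward $B_n(\omega)_*\nu$ converges weakly to $\delta_{Z(\omega)}$. Strong irreducibility rules out any $T$-invariant measure being supported on such an exceptional finite set, so the hypothesis is automatically met for any invariant $\nu$.

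Given any $T$-invariant $\nu$, iterating the fixed-point identity writes $\nu = \sum_{\mathbf{i}\in\mathcal{I}^n} p_{\mathbf{i}}\,(B_n)_*\nu$, which is the $\omega$-average under $\mathbb{P}$ of $B_n(\omega)_*\nu$. Passing to the limit $n\to\infty$ via dominated convergence against continuous test functions yields $\nu=\int_{\Omega}\delta_{Z(\omega)}\,d\mathbb{P}(\omega)$; since the right-hand side depends only on the data $(\mathcal{P},(p_i))$ and not on $\nu$, uniqueness is forced. The principal obstacle I foresee is justifying the weak convergence $B_n(\omega)_*\nu\to\delta_{Z(\omega)}$ while simultaneously ruling out that $\nu$ places mass on the random exceptional hyperplane; this is exactly the point where strong irreducibility must be deployed carefully, and it is the analytical core supplied by Furstenberg's original argument.
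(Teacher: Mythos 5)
The paper does not prove this statement; it is imported as a known result from Furstenberg (1963) and Hutchinson (1981), so there is no in-paper argument to measure yours against. Judged on its own, your existence half is complete and correct: $\mathcal{M}_1(\mathbb{RP}^1)$ is weak-$*$ compact and convex, $T$ is affine and weak-$*$ continuous, and Markov--Kakutani applies; you are also right that neither strong irreducibility nor unboundedness is needed there.

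The uniqueness half has the right architecture (martingale/boundary convergence of $(B_n(\omega))_*\nu$ to a random Dirac mass, then disintegration of $\nu$ over $\Omega$), but two points are genuine gaps rather than omitted routine detail. First, the implication you lean on --- positive top Lyapunov exponent implies $(B_n(\omega))_*\nu\to\delta_{Z(\omega)}$ for all admissible $\nu$ with a $Z(\omega)$ not depending on $\nu$ --- is stated in the reverse of the usual logical order: in Furstenberg's argument one first shows, using the bounded measure-valued martingale $n\mapsto (B_n(\omega))_*\nu$ together with strong irreducibility and non-compactness, that the a.s.\ weak-$*$ limit is a Dirac mass located at a point determined by the matrix product alone; positivity of the exponent is then a corollary. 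Your order can be salvaged via the singular value decomposition of $B_n(\omega)$, but then the convergence of the attracting direction and the identification of the exceptional direction need their own proof, which is precisely the ``analytical core'' you defer. Second, the hypothesis you must verify for an invariant $\nu$ is that it gives \emph{zero mass} to the exceptional point (in $\mathbb{RP}^1$ a hyperplane is a single point), not merely that $\nu$ fails to be \emph{supported} on a finite set; the correct lemma is that every stationary measure of a strongly irreducible system is non-atomic, proved by observing that the set of atoms of maximal mass would otherwise be a nonempty finite invariant set. With those two repairs your sketch becomes the standard proof.
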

\begin{definition}[Bernoulli measure]
	Let $\mathcal{I}^*=\{1,2,\ldots,m \}^{\mathbb{N}}$ be the sequence space and for ${\bf i}=(i_1,i_2,\ldots)\in\mathcal{I}^*$, define ${\bf i}\mid_{n}=(i_1,i_2,\ldots, i_n)$. Then the cylinder set on $\mathcal{I}^*$, is defined by $[i_1,i_2,\ldots,i_n]=\left\{{\bf j}\in\mathcal{I}^*:{\bf j}\mid_{n}=(i_1,i_2,\ldots, i_n)\right\}$. Let $(p_1,p_2,\ldots,p_m)$ be the probability vector and $\mathcal{B}(\mathcal{I}^*)$ be the $\sigma$-algebra on $\mathcal{I}^*$ generated by the cylinder sets and given a cylinder set $[i_1,i_2,\ldots,i_n]$, the measure $\mu$ on $\mathcal{B}(\mathcal{I}^*)$ is defined by 
	\begin{equation}
		\mu([i_1,i_2,\ldots,i_n])=\prod_{k=1}^{n}p_{i_k}.
	\end{equation}
	This measure $\mu$ is known as {\bf Bernoulli measure}. 
\end{definition}
\begin{theorem}[\cite{Barany2012,Hutchinson1981}]\label{hutc1981}
	If $P$ is the probability measure associated with the RPIFS $\mathscr{W}_{\mathcal{P}}=\left\{\mathbb{RP}^1; w_A:A\in\mathcal{P}\right\}$ and $\mu$ be the Bernoulli measure on $\mathcal{I}^*=\{1,2,\ldots,m \}^{\mathbb{N}}$, then 
	\begin{equation}
		P=\mu\circ\Pi^{-1},
	\end{equation}
	where $\Pi:\mathcal{I}^*\to F_{\mathcal{P}}$ be the coordinate map given by $\Pi({\bf i})=\lim\limits_{n\to\infty}w_{A_{i_1}}w_{A_{i_2}}\cdots w_{A_{i_n}}([x])$, for arbitrary $[x]\in\mathbb{RP}^1$.
\end{theorem}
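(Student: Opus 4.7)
The plan is to prove this by showing that the pushforward $\nu := \mu \circ \Pi^{-1}$ satisfies the same self-referential invariance equation as $P$ in Theorem \ref{exopbb}, and then appealing to the uniqueness part of that theorem.

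First I would check that the coordinate map $\Pi$ is well-defined as a function from $\mathcal{I}^*$ into $F_{\mathcal{P}}$. Under the hypotheses of this section, the RPIFS has an attractor avoiding a hyperplane, so by Theorem \ref{extofatt} there is a multi-cone $C$ on which $w_{A_j}$ act as strict contractions with respect to $d_{\mathbb{P}}$. Consequently, for any ${\bf i}=(i_1,i_2,\ldots)\in\mathcal{I}^*$, the diameters of $w_{A_{i_1}}w_{A_{i_2}}\cdots w_{A_{i_n}}(\overline{C})$ shrink to zero, so $\Pi({\bf i})$ is a single point in $F_{\mathcal{P}}$ independent of the choice of $[x]$. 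Standard uniform-contraction arguments also show that $\Pi$ is continuous, hence Borel measurable, so $\nu=\mu\circ\Pi^{-1}$ is a well-defined Borel probability measure on $\mathbb{RP}^1$ with support in $F_{\mathcal{P}}$.

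The central step is establishing the identity $\Pi\circ\tau_j=w_{A_j}\circ\Pi$, where $\tau_j:\mathcal{I}^*\to[j]$ is the prepending map ${\bf i}'\mapsto j{\bf i}'$. This follows immediately from the definition of $\Pi$ as a limit and continuity of each $w_{A_j}$. Partitioning $\mathcal{I}^*=\bigsqcup_{j\in\mathcal{I}}[j]$ and using the Bernoulli property $\mu(\tau_j(E))=p_j\mu(E)$ for measurable $E\subset\mathcal{I}^*$, we obtain for any Borel $B\subseteq\mathbb{RP}^1$
\begin{equation*}
\nu(B)=\mu(\Pi^{-1}(B))=\sum_{j\in\mathcal{I}}\mu\bigl(\Pi^{-1}(B)\cap[j]\bigr)=\sum_{j\in\mathcal{I}}p_j\,\mu\bigl(\Pi^{-1}(A_j^{-1}(B))\bigr)=\sum_{j\in\mathcal{I}}p_j\,\nu\circ A_j^{-1}(B).
\end{equation*}
Thus $\nu$ satisfies the same fixed-point equation that characterises $P$ in Theorem \ref{exopbb}.

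Finally, by the uniqueness clause of Theorem \ref{exopbb}, $P=\nu=\mu\circ\Pi^{-1}$, completing the proof. I expect the main subtlety to be the verification that $\Pi$ is well-defined on all of $\mathcal{I}^*$ (not just on sequences whose initial cylinder keeps one inside the multi-cone); this is handled by noting that, since the attractor avoids a hyperplane, one may choose the starting point $[x]$ inside the multi-cone and use contractivity on $\overline{C}$ uniformly in ${\bf i}$. The invariance computation itself is essentially the standard Hutchinson argument transplanted to $\mathbb{RP}^1$, so no new difficulty arises there.
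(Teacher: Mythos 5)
Your argument is correct. Note that the paper does not prove this statement at all: it is quoted as a known result from the cited references, so there is no in-paper proof to compare against. Your proof is the standard Hutchinson/Furstenberg argument that those references use --- show $\nu=\mu\circ\Pi^{-1}$ satisfies $\nu=\sum_{j}p_j\,\nu\circ w_{A_j}^{-1}$ via the intertwining relation $\Pi\circ\tau_j=w_{A_j}\circ\Pi$ and the Bernoulli property, then invoke the uniqueness clause of Theorem~\ref{exopbb} --- and your explicit attention to the well-definedness of $\Pi$ (choosing the base point inside the multi-cone guaranteed by Theorem~\ref{extofatt}) addresses the one genuine subtlety in the ``for arbitrary $[x]$'' phrasing of the statement.
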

\section{Cantor-like set on the real projective line}
In this section, we consider the set $\mathcal{P}=\left\{A_1=\begin{pmatrix} 
	\frac{1}{3} & -\frac{2}{3} \\
	0 & 1 
\end{pmatrix}, A_2=\begin{pmatrix}
	\frac{1}{3} & \frac{2}{3} \\
	0 & 1
\end{pmatrix}\right\}\subset GL(2,\mathbb{R})$ and the corresponding RPIFS on $\mathbb{RP}^1$. First, we see that this RPIFS has an attractor. Then, we estimate the Hausdorff dimension of this attractor. We also, prove the existence of an invariant probability measure whose support is the attractor of this RPIFS. Here, we get the following results.
\begin{theorem}\label{tclset}
	The RPIFS $\mathscr{W}_{\mathcal{P}}=\left\{\mathbb{RP}^1; w_{A_1},w_{A_2}\right\}$ associated with the set $\mathcal{P}$ has an attractor.
\end{theorem}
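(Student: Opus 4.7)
The plan is to prove the theorem by invoking condition (4) of Theorem~\ref{extofatt}: to exhibit an explicit cone $C\subset\mathbb{RP}^1$ such that $W_{\mathcal{P}}(\overline{\mathrm{int}(C)})\subsetneq \mathrm{int}(C)$. This is the most direct of the four equivalent conditions because the maps $w_{A_1},w_{A_2}$ are very transparent on $\mathbb{RP}^*$, so a cone supported in $\mathbb{RP}^*$ (i.e.\ avoiding the point at infinity) lets us reduce everything to a routine check on intervals of $\mathbb{R}$.

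First I would compute the action of $w_{A_1}$ and $w_{A_2}$ on the chart $\mathbb{RP}^*$. A direct matrix multiplication gives $w_{A_1}[x:1]=[(x-2)/3:1]$ and $w_{A_2}[x:1]=[(x+2)/3:1]$, so under the identification $[x:1]\leftrightarrow x$ these are precisely the affine contractions $f_1(x)=(x-2)/3$ and $f_2(x)=(x+2)/3$ generating the classical middle-third Cantor set. This identification is legitimate on any set whose representative segment in $\mathbb{RP}^1$ does not contain $\infty=[1:0]$.

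Next I would take, for some fixed $a>1$ (for instance $a=2$), the cone $C=C_{\overline{xy}}$ with $[x]=[-a:1]$ and $[y]=[a:1]$. The finite segment $[-a,a]\subset\mathbb{RP}^*$ representing this cone avoids $[1:0]$, so $C$ is a valid generating cone and $\mathrm{int}(C)$ corresponds to $(-a,a)$. Then
\begin{equation*}
w_{A_1}(\overline{\mathrm{int}(C)})=\Bigl[-\tfrac{a+2}{3},\,\tfrac{a-2}{3}\Bigr],\qquad w_{A_2}(\overline{\mathrm{int}(C)})=\Bigl[-\tfrac{a-2}{3},\,\tfrac{a+2}{3}\Bigr].
\end{equation*}
The strict inclusions $(a+2)/3<a$ and $-a<-(a+2)/3$ both reduce to $a>1$, so each image lies strictly inside $(-a,a)$. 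Hence $W_{\mathcal{P}}(\overline{\mathrm{int}(C)})\subsetneq \mathrm{int}(C)$, and Theorem~\ref{extofatt} immediately produces the attractor $F_{\mathcal{P}}$.

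There is no real obstacle here; the only point requiring a little care is making sure the cone $C$ is used in the projective sense and that the chosen segment genuinely avoids $[1:0]$, so that the affine description of the maps is valid on $\overline{\mathrm{int}(C)}$ throughout the verification. Beyond that, the statement is essentially a reformulation of the fact that $f_1,f_2$ are the standard Cantor-set contractions on $\mathbb{R}$.
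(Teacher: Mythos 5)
Your proposal is correct, but it takes a different one of the four equivalent conditions in Theorem~\ref{extofatt} than the paper does. The paper normalizes $\mathcal{P}$ into $SL(2,\mathbb{R})$ (dividing by $\sqrt{\det A_i}=1/\sqrt{3}$), observes that every product of $n$ such matrices is upper triangular with diagonal entries $(1/\sqrt{3})^n$ and $(\sqrt{3})^n$, concludes $\Vert A\Vert\geq(\sqrt 3)^n$, and invokes condition (2) (uniform hyperbolicity). You instead verify condition (4) by exhibiting an explicit strictly invariant cone. Your computation is right: $w_{A_1},w_{A_2}$ act on the chart $\mathbb{RP}^*$ as $x\mapsto(x\mp2)/3$, and for $a>1$ the interval $[-a,a]$ is mapped into $[-(a+2)/3,(a+2)/3]\subsetneq(-a,a)$; note that $a>1$ is genuinely needed for strictness, since the cone generated by $[-1:1]$ and $[1:1]$ (which the paper uses only for its illustrative construction of the attractor, not in the proof) is mapped onto a set touching its own boundary. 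What each approach buys: the paper's norm estimate is a one-line algebraic computation that is reused verbatim later (in the zeta-function calculation for the Hausdorff dimension and in showing the semigroup is unbounded), whereas your cone argument is more geometric and self-contained, makes the contraction on $\mathbb{RP}^*$ explicit, and does not require passing to $SL(2,\mathbb{R})$ since projectively $w_A=w_{cA}$. Either route legitimately yields the attractor via Theorem~\ref{extofatt}.
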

\begin{proof}
	Since $\det(A_1)=\det(A_2)=\frac{1}{3}>0$, so, we consider the corresponding RPIFS $\mathscr{W}_{\mathcal{P}}^+$, where $\mathcal{P}=\left\{A_1=\begin{pmatrix} 
		\frac{1}{\sqrt{3}} & -\frac{2}{\sqrt{3}} \\
		0 &  \sqrt{3}
	\end{pmatrix}, A_2=\begin{pmatrix}
		\frac{1}{\sqrt{3}} & \frac{2}{\sqrt{3}}\\
		0 & \sqrt{3}
	\end{pmatrix}\right\}\subset SL(2,\mathbb{R})$. It can be seen that for all $A\in\mathcal{P}^n$ and $n\in\mathbb{N}$,
	\begin{equation}
		A=\begin{pmatrix}
			\left(\frac{1}{\sqrt{3}}\right)^n & * \\
			0 & \left(\sqrt{3}\right)^n
		\end{pmatrix}.
	\end{equation}
	So, $\Vert A\Vert\geq (\sqrt{3})^n$. Therefore, $\mathcal{P}$ is uniformly hyperbolic. Therefore, by Theorem~\ref{extofatt} the RPIFS $\mathscr{W}^+_{\mathcal{P}}=\left\{\mathbb{RP}^1; w_{A_1},w_{A_2}\right\}$ has an attractor. Since for $i=1,2$, $w_{A_i}$'s are the projective transformations. So, $\mathscr{W}^+_{\mathcal{P}}$ and $\mathscr{W}_{\mathcal{P}}$ have the same attractor. Therefore, $\mathscr{W}_{\mathcal{P}}$ has an attractor in $\mathbb{RP}^1$.
\end{proof} 
Here, we see the step-by-step construction of the attractor.\\
Let us consider the cone $C_{\overline{ab}}$, generated by the points $[a]=[-1:1]$ and $[b]=[1:1]$ (see Fig~\ref{fig3}). 
\begin{figure}[ht!]
	\centering
	\includegraphics[width=7cm, height=7cm]{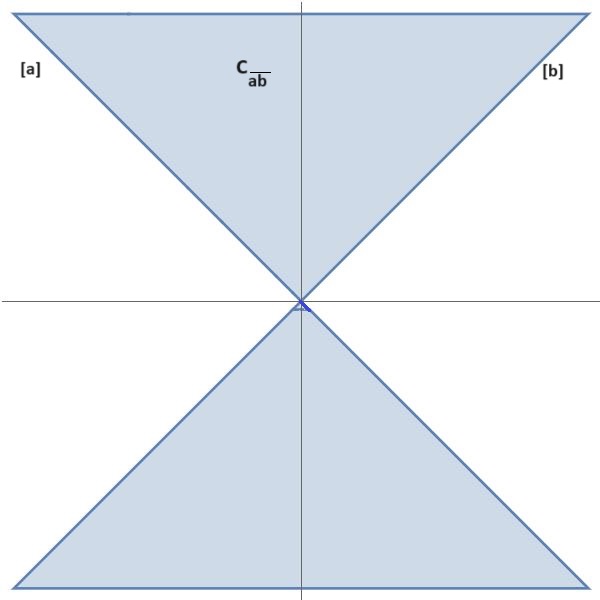}
	\caption{Cone generated by the points $[a]$ and $[b]$.} \label{fig3}
\end{figure}
Now
\begin{align*}
	&w_{A_1}[a]=[A_1a]=[-1:1];\quad w_{A_1}[b]=[A_1b]=[-\frac{1}{3}:1]\\
	&w_{A_2}[a]=[A_2a]=[\frac{1}{3}:1];\quad w_{A_2}[b]=[A_2b]=[1:1].
\end{align*}
Since projective transformation preserves the collinearity. So, $w_{A_1}(C_{\overline{ab}})=C_{\overline{a_1b_1}}$, is the cone generated by the points $[a_1]=[a]=[-1:1]$ and $[b_1]=[-\frac{1}{3}:1]$. Similarly, $w_{A_2}(C_{\overline{ab}})=C_{\overline{a_2b_2}}$, is the cone generated by the points $[a_2]=[\frac{1}{3}:1]$ and $[b_2]=[b]=[1:1]$. Therefore, $W_{\mathcal{P}}(C_{\overline{ab}})=C_{\overline{a_1b_1}}\cup C_{\overline{a_2b_2}}$ (see Fig~\ref{fig4}). To continue this process, we get the attractor of this RPIFS (see Fig~\ref{fig5}). We name it as projective Cantor set $F_{\mathcal{P}}$, (say).
\begin{figure}[ht]
	\centering
	\includegraphics[width=7cm, height=7cm]{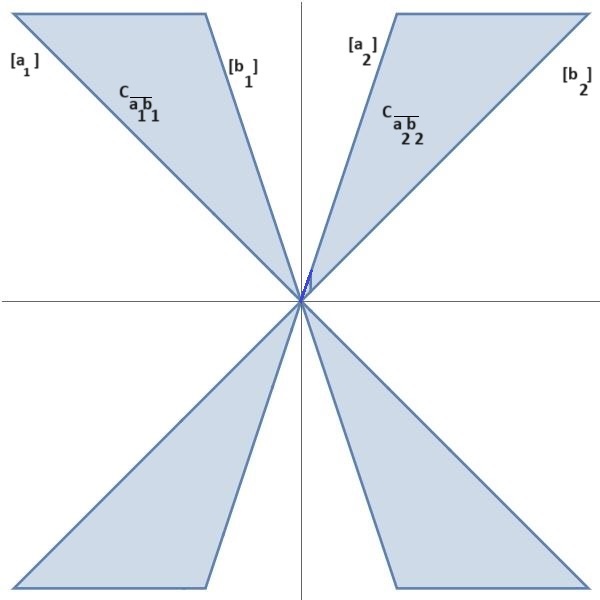}
	\caption{Multi cone generated by the points $[a_1],[b_1]$ and $[a_2],[b_2]$.} \label{fig4}
\end{figure}
\begin{figure}[ht!]
	\centering
	\includegraphics[width=7cm, height=7cm]{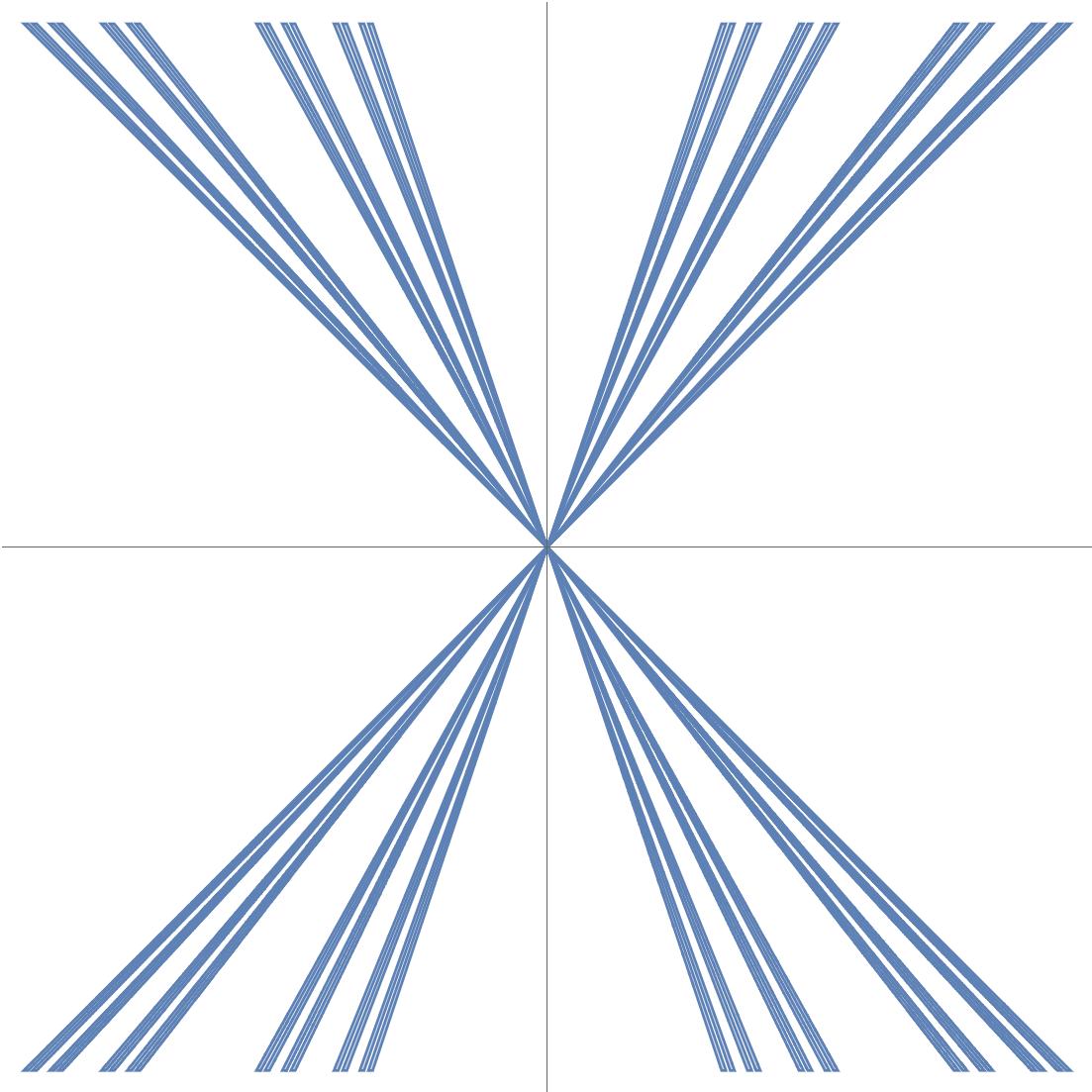}
	\caption{Projective Cantor set on $\mathbb{RP}^1$.} \label{fig5}
\end{figure}

\begin{note}
	Observe that if one see the attractor $F_{\mathcal{P}}$ on the line $y=a \;  (a\neq 0)$, then it looks like Cantor set on $\mathbb{R}$. So the attractor $F_{\mathcal{P}}$ of the RPIFS  $\mathscr{W}_{\mathcal{P}}$ is a Cantor-like set on $\mathbb{RP}^1$.
\end{note}
\begin{theorem}
	If $F_{\mathcal{P}}$ is the attractor of the RPIFS $\mathscr{W}_{\mathcal{P}}^+$ given in Theorem~\ref{tclset}, then $\dim_H(F_{\mathcal{P}})=\frac{\log 2}{\log 3}$. 
\end{theorem}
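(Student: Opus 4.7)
The plan is to invoke Theorem~\ref{exthd}, which identifies $\dim_H(F_\mathcal{P})$ with $\min\{1,\xi_\mathcal{P}\}$ once we verify that $\mathcal{P}$ is Diophantine, semi-discrete, and that $F_\mathcal{P}$ is non-singleton. The non-singleton hypothesis is immediate: the maps $w_{A_1}$ and $w_{A_2}$ fix the distinct points $[-1:1]$ and $[1:1]$ respectively, so both lie in $F_\mathcal{P}$. Semi-discreteness is just as painless, since every element of $\mathcal{P}^n$ has diagonal entries $((1/\sqrt{3})^n,(\sqrt{3})^n)$ and hence stays bounded away from the identity, forcing $Id\notin\overline{\mathcal{P}^*}$.

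The heart of the argument is a clean formula for $\|A_\mathbf{i}\|$. A routine induction on $n$, starting from the shape used in the proof of Theorem~\ref{tclset}, shows that for $\mathbf{i}\in\mathcal{I}^n$,
\begin{equation*}
A_\mathbf{i}=\begin{pmatrix} (1/\sqrt{3})^n & c_n(\mathbf{i}) \\ 0 & (\sqrt{3})^n \end{pmatrix},\qquad c_n(\mathbf{i})=2(\sqrt{3})^n\sum_{k=1}^n\frac{\epsilon_k}{3^k},
\end{equation*}
where $\epsilon_k=-1$ if $i_k=1$ and $\epsilon_k=+1$ if $i_k=2$. The geometric bound $\bigl|\sum_{k=1}^n\epsilon_k/3^k\bigr|\le 1/2$ yields $|c_n(\mathbf{i})|\le(\sqrt{3})^n$, and therefore $\|A_\mathbf{i}\|=(\sqrt{3})^n$ for \emph{every} word of length $n$. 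Summing over the $2^n$ words of length $n$,
\begin{equation*}
\zeta_\mathcal{P}(t)=\sum_{n=1}^\infty 2^n\cdot 3^{-nt},
\end{equation*}
which converges precisely when $t>\log 2/\log 3$, so $\xi_\mathcal{P}=\log 2/\log 3<1$.

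For the Diophantine condition I would compute $A_\mathbf{i}^{-1}A_\mathbf{j}$ directly from the explicit upper-triangular form; it is unipotent with $(1,2)$-entry equal to $(\sqrt{3})^n\bigl(c_n(\mathbf{j})-c_n(\mathbf{i})\bigr)$. When $\mathbf{i}\neq\mathbf{j}$, the standard signed-ternary separation estimate (if $k_0$ is the first index where the $\epsilon$-sequences disagree, the contribution $\pm 2/3^{k_0}$ dominates the tail $\sum_{k>k_0}2/3^k=1/3^{k_0}$) gives $|c_n(\mathbf{j})-c_n(\mathbf{i})|\ge 2(\sqrt{3})^{-n}$, so the $(1,2)$-entry of $A_\mathbf{i}^{-1}A_\mathbf{j}$ has absolute value at least $2$. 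By left-invariance $d(A_\mathbf{i},A_\mathbf{j})=d(Id,A_\mathbf{i}^{-1}A_\mathbf{j})$ is then bounded below by a positive constant, which is much stronger than the required $c^n$. With all three hypotheses of Theorem~\ref{exthd} verified and $\xi_\mathcal{P}=\log 2/\log 3<1$, the theorem delivers $\dim_H(F_\mathcal{P})=\log 2/\log 3$ at once. The only step requiring genuine care is the induction pinning $\|A_\mathbf{i}\|$ to exactly $(\sqrt{3})^n$; the numerology works because the geometric tail $\sum 1/3^k=1/2$ exactly offsets the $(\sqrt{3})^n$ prefactor, collapsing the zeta function to a pure geometric series.
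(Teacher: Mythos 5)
Your proposal is correct and follows the same route as the paper: verify the hypotheses of Theorem~\ref{exthd} (Diophantine, semi-discrete, non-singleton attractor), show $\Vert A_{\bf i}\Vert=(\sqrt{3})^n$, and compute $\zeta_{\mathcal{P}}(t)=\sum_n 2^n 3^{-nt}$ to get $\xi_{\mathcal{P}}=\frac{\log 2}{\log 3}$. You actually supply justifications the paper only asserts --- the explicit off-diagonal entry $c_n(\mathbf{i})$ bounding $\Vert A_{\bf i}\Vert$ in the original entry-max norm (the paper instead switches to the spectral norm), and the signed-ternary separation argument for the Diophantine condition --- but the overall strategy is identical.
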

\begin{proof}
	Since 
	\begin{equation}
		{\bf i,j} \in \mathcal{I}^n,\quad {\bf i}\neq {\bf j}\implies	d(A_{\bf i},A_{\bf j})\geq \left(\frac{2}{\sqrt{3}}\right)^n.
	\end{equation}
	So the set $\mathcal{P}$ is Diophantine. Also $Id\notin \overline{A^*}$. Therefore, the set $\mathcal{P}$ is non-discrete. Also the points $[-1:1],[1:1]\in F_{\mathcal{P}}$. So, $F_{\mathcal{P}}$ is not singleton. Hence by Theorem \ref{exthd} 
	\begin{equation}\label{hausd}
		\dim_H(F_{\mathcal{P}})=\min\{1,\xi_{\mathcal{P}}\}.
	\end{equation}
	Now, all the matrices in $\mathcal{P}$ have at least one non-zero eigenvalue, so we consider the spectral norm here. That is for $A\in SL(2, \mathbb{R})$, $\Vert A\Vert=\vert\lambda\vert$, where $\lvert\lambda\rvert$ is the spectral radius of $A$. Since for any ${\bf i\in\mathcal{I}^n}$, we have 
	\begin{equation}
		A_{\bf i}=\begin{pmatrix}
			\left(\frac{1}{\sqrt{3}}\right)^n & * \\
			0 & \left(\sqrt{3}\right)^n
		\end{pmatrix}.
	\end{equation}
	So, $\Vert A_{\bf i}\Vert=\left(\sqrt{3}\right)^n$. Therefore,
	\begin{align*}
		\zeta_{\mathcal{P}}(t):=\sum_{n=1}^{\infty}\sum_{{\bf i}\in\mathcal{I}^n}\big(\Vert A_{\bf i}\Vert\big)^{-2t}&=\sum_{n=1}^{\infty}2^n\left((\sqrt{3})^n\right)^{-2t}\\
		&=\sum_{n=1}^{\infty}\left(\frac{2}{3^t}\right)^n.
	\end{align*}
	This series is convergent if $\frac{2}{3^t}< 1$. That is $\frac{\log 2}{\log 3}<t$. And it is divergent if $\frac{\log 2}{\log 3}\geq t$. Therefore, $\xi_{\mathcal{P}}=\frac{\log 2}{\log 3}$. Hence from (\ref{hausd}), $\dim_H(F_{\mathcal{P}})=\frac{\log 2}{\log 3}$. 
\end{proof}
The following theorem proves the existence of an invariant probability measure associated with $\mathscr{W}^+_{\mathcal{P}}$.
\begin{theorem}\label{methe}
	Let $p=(\frac{1}{2},\frac{1}{2})$ be the probability vector and\\ $\mathcal{P}=\left\{A_1=\begin{pmatrix} 
		\frac{1}{\sqrt{3}} & -\frac{2}{\sqrt{3}} \\
		0 &  \sqrt{3}
	\end{pmatrix}, A_2=\begin{pmatrix}
		\frac{1}{\sqrt{3}} & \frac{2}{\sqrt{3}}\\
		0 & \sqrt{3}
	\end{pmatrix}\right\}$. Let  $\mathscr{W}^+_{\mathcal{P}}=\left\{\mathbb{RP}^1; w_{A_1},w_{A_2}\right\}$ be the associated RPIFS. Then there exists unique probability measure $P$ on $\mathbb{RP}^1$ such that 
	\begin{equation}\label{pequio}
		P=\frac{1}{2}P\circ w_{A_1}^{-1}+\frac{1}{2}P\circ w_{A_2}^{-1}.
	\end{equation}
	In particular, if $\mu$ is the Bernoulli measure on the sequence space $\{1,2\}^{\mathbb{N}}$ with associated probability vector $p=(\frac{1}{2},\frac{1}{2})$, then 
	\begin{equation}\label{bmas}
		P   (E)=\mu\circ \Pi^{-1}(E)\quad\mbox{for}~E\subset\mathbb{RP}^*,
	\end{equation}
	where $\Pi:\{1,2\}^{\mathbb{N}}\to F_{\mathcal{P}}$ is given by $\Pi({\bf i})=\lim\limits_{n\to\infty}A_{i_1}A_{i_2}\cdots A_{i_n}([x])$, for arbitrary $[x]\in\mathbb{RP}^1$.
\end{theorem}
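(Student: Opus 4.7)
The plan is to deduce both halves of the statement directly from Theorems \ref{exopbb} and \ref{hutc1981}, so the real work reduces to verifying, for this specific $\mathcal{P}$, the two hypotheses of Theorem \ref{exopbb}: that the semigroup $\mathcal{P}^*$ is unbounded and that $\mathcal{P}$ is strongly irreducible.

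Unboundedness is essentially already proved inside Theorem \ref{tclset}: every product $A_{\mathbf{i}}\in\mathcal{P}^n$ is upper triangular with diagonal entries $(1/\sqrt{3})^n$ and $(\sqrt{3})^n$, so $\|A_{\mathbf{i}}\|\ge(\sqrt{3})^n\to\infty$, and in particular $\mathcal{P}^*$ is not contained in any bounded subset of $SL(2,\mathbb{R})$. For strong irreducibility I would compute the fixed points on $\mathbb{RP}^1$ of each generator by a direct eigenvector calculation: $A_1$ fixes $[1:0]$ and $[-1:1]$, while $A_2$ fixes $[1:0]$ and $[1:1]$, and one checks, for example, that $w_{A_1}([1:1])=[-1/3:1]$ and $w_{A_2}([-1:1])=[1/3:1]$, so the orbits of $[\pm 1:1]$ under $\mathcal{P}^*$ are infinite. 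Since the attractor $F_{\mathcal{P}}$ avoids the hyperplane $\{[1:0]\}$, any finite subset of $\mathbb{RP}^1$ that the semigroup could preserve in the sense relevant to the Furstenberg--Hutchinson construction must lie in $\mathbb{RP}^*$, and the previous orbit computation rules this out.

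With the hypotheses in place, Theorem \ref{exopbb} applied with probability vector $p=(\tfrac12,\tfrac12)$ yields the existence and uniqueness of a probability measure $P$ on $\mathbb{RP}^1$ satisfying the self-similarity relation \eqref{pequio}. The Bernoulli representation \eqref{bmas} is then an immediate application of Theorem \ref{hutc1981}, once one notes that contractivity of $W_{\mathcal{P}}$ on the invariant multi-cone (as established in Theorem \ref{tclset}) ensures the coordinate map $\Pi(\mathbf{i})=\lim_{n\to\infty}w_{A_{i_1}}\cdots w_{A_{i_n}}([x])$ is well defined and independent of $[x]\in\mathbb{RP}^1$; substituting into the cited identity gives $P=\mu\circ\Pi^{-1}$ on $\mathbb{RP}^*$.

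The main obstacle I anticipate is the strong irreducibility step. Both $A_1$ and $A_2$ are upper triangular and therefore share the common eigenvector $(1,0)^\top$, so $\{[1:0]\}$ is formally a finite set preserved by the whole semigroup. To make Theorem \ref{exopbb} apply one must either interpret strong irreducibility as a condition on the action restricted to the attractor (which excludes the point at infinity), or appeal to a standard variant of the Furstenberg--Hutchinson theorem for contractive IFS on an invariant compact set; either way, the argument of Theorem \ref{tclset} supplies the contractivity needed to close the gap.
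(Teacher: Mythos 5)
Your overall route is the same as the paper's: verify that $\mathcal{P}$ generates an unbounded semigroup and is strongly irreducible, invoke Theorem \ref{exopbb} to get \eqref{pequio}, and then Theorem \ref{hutc1981} to get \eqref{bmas}. The unboundedness step matches the paper's and is fine. The important point is the one you flag at the end: strong irreducibility, as defined in the paper, genuinely fails here. Each $A_i$ has two eigendirections on $\mathbb{RP}^1$, not one; since both matrices are upper triangular they share the eigenvector $(1,0)^{\top}$, so $\{[1:0]\}$ is a finite subset of $\mathbb{RP}^1$ preserved by every map in the semigroup (indeed $[1:0]$ is a common fixed point, so $\mathcal{P}$ is not even irreducible in the paper's sense). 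The paper's own proof asserts that $A_1$ fixes the \emph{unique} point $[-1:1]$ and $A_2$ the unique point $[1:1]$ and deduces strong irreducibility from that; the assertion is incorrect, and your eigenvector computation exposes it. Your intermediate attempt to rescue strong irreducibility by restricting attention to finite sets contained in $\mathbb{RP}^*$ does not work as stated, since the definition places no such restriction --- but you acknowledge this yourself.

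Your proposed repair --- apply the Hutchinson fixed-point theorem for a contractive IFS on the compact invariant multicone furnished by Theorems \ref{extofatt} and \ref{tclset} --- is the correct way to obtain existence and uniqueness of $P$ satisfying \eqref{pequio}, and it simultaneously gives the well-definedness of the coding map $\Pi$ needed for \eqref{bmas}. In that sense your write-up is more careful than the paper's: it follows the same skeleton but correctly identifies that the hypothesis of Theorem \ref{exopbb} is not satisfied and supplies a valid alternative. To make the patch fully airtight you should state the contraction estimate explicitly rather than cite contractivity abstractly: on $\mathbb{RP}^*$ the maps act as $x\mapsto \frac{x}{3}\mp\frac{2}{3}$, which are similarities of ratio $\frac{1}{3}$ with respect to $d_{\mathbb{P}}$ and map the cone $C_{\overline{ab}}$ into itself, so Hutchinson's theorem applies on $\big(C_{\overline{ab}},d_{\mathbb{P}}\big)$ and uniqueness of $P$ (supported on $F_{\mathcal{P}}\subset C_{\overline{ab}}$) follows.
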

\begin{proof}
	Since $A_1$ fixes the unique point $[a]=[-1:1]$ and $A_2$ fixes the unique point $[b]=[1:1]$. So the matrices $A_1$ and $A_2$ do not preserve any finite subset of $\mathbb{RP}^1$, simultaneously. Hence the set $\mathcal{P}$ is strongly irreducible. Also, for all $A\in\mathcal{P}^n$,
	\begin{equation}
		\Vert A\Vert\geq (\sqrt{3})^n.
	\end{equation}
	So, $\mathcal{P}$ generates the unbounded semi-group. Therefore, by Theorem \ref{exopbb}, there exists a unique probability measure $P$ which satisfies the equation 
	\begin{equation*}
		P=\frac{1}{2}P\circ w_{A_1}^{-1}+\frac{1}{2}P\circ w_{A_2}^{-1}.
	\end{equation*}
	The last part of the proof follows from  Theorem~\ref{hutc1981}.
\end{proof}
\section{Voronoi partition and Quantization error}
In this section, we define the Voronoi region on the real projective line $\mathbb{RP}^1$ and the $n$-th quantization error for a probability distribution. We see the action of a projective transformation on the Voronoi region in $\mathbb{RP}^1$. Also, we prove results related to  $n$-th quantization error of order $r$ of an invariant probability measure.
\begin{definition}
	A set $A\subset\mathbb{RP}^1$ is said to be a locally finite set in the sense that for any bounded set $B\subset\mathbb{RP}^1$, the number of elements in $A\cap B$ is finite and a collection $\mathscr{A}$ of subsets of $\mathbb{RP}^1$ is called locally finite if the number of elements in $\mathscr{A}$ intersecting any bounded subset of $\mathbb{RP}^1$ is finite.
\end{definition}
\begin{definition}
	Let $\Delta$ be a locally finite subset of $\mathbb{RP}^*$. Then the set 
	\begin{equation*}
		\mathbb{W}\left([a]\vert \Delta\right)=\left\{[x]\in\mathbb{RP}^*:d_{\mathbb{P}}\big([x],[a]\big)=\min_{[b]\in\Delta}d_{\mathbb{P}}\big([x],[b]\big)~\right\}
	\end{equation*}
	is said to be the Voronoi region generated by $[a]$. 
\end{definition}
\begin{remark}
	The Voronoi diagram $\left\{\mathbb{W}\left([a]\vert \Delta\right): [a]\in\Delta\right\}$ is locally finite covering of $\mathbb{RP}^*$.
\end{remark}
The following result shows that the action of certain projective transformation on the Voronoi region generated by some point is same as the Voronoi region generated by the image point under the transformation. 
\begin{theorem}\label{actptonvr}
	Let $A=\begin{pmatrix}
		a_{11} & a_{12}\\
		v & 1 \\
	\end{pmatrix}\in GL(2,\mathbb{R})$ and  $T_A:\mathbb{RP}^1\to\mathbb{RP}^1$ be the corresponding projective transformation. Then the following holds:
	\begin{enumerate}
		\item If $v=0$, then $\mathbb{W}\big(T_A[a]\vert T_A(\Delta)\big)=T_A\big(\mathbb{W}\left([a]\vert \Delta\right)\big)$ for all $[a]\in\Delta$.
		\item  If $T_A(\Delta)\subset\mathbb{RP}^*$ and  $\mathbb{W}\big(T_A[a]\vert T_A(\Delta)\big)=T_A\big(\mathbb{W}\left([a]\vert \Delta\right)\big)$ for all $[a]\in\Delta$, then $v=0$.
	\end{enumerate}
\end{theorem}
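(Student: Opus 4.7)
My plan is to handle the two implications separately, both by exploiting the explicit formula for $T_A$ in affine coordinates. On the chart $[x:1]$, one has
$T_A[x:1] = \big[\tfrac{a_{11}x+a_{12}}{vx+1}:1\big]$ whenever $vx+1\neq 0$, and the projective metric is simply $d_{\mathbb{P}}([x_1:1],[x_2:1]) = |x_1-x_2|$, which makes all distance computations transparent.

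For part (1), assume $v=0$. Then $T_A[x:1] = [a_{11}x+a_{12}:1]$ defines a bijection of $\mathbb{RP}^*$ onto itself ($a_{11}\neq 0$ follows from $\det A=a_{11}$), and a one-line computation yields
$d_{\mathbb{P}}(T_A[x_1:1],T_A[x_2:1]) = |a_{11}|\cdot d_{\mathbb{P}}([x_1:1],[x_2:1])$. Hence $T_A$ is a similarity on $\mathbb{RP}^*$ with ratio $|a_{11}|>0$. Since a similarity preserves all inequalities between distances, I will rewrite the definition $[y]\in T_A(\mathbb{W}([a]|\Delta))$ as $T_A^{-1}[y]\in\mathbb{W}([a]|\Delta)$, apply the scaling formula to both sides of $d_{\mathbb{P}}(T_A^{-1}[y],[a])\leq d_{\mathbb{P}}(T_A^{-1}[y],[b])$ to obtain $d_{\mathbb{P}}([y],T_A[a])\leq d_{\mathbb{P}}([y],T_A[b])$ for all $[b]\in\Delta$, and read this as $[y]\in \mathbb{W}(T_A[a]|T_A(\Delta))$. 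Every step is an equivalence, so the two sets coincide.

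For part (2), I will argue by contrapositive: assuming $v\neq 0$, I will contradict the hypothesis. The key point is that the Voronoi cells cover $\mathbb{RP}^*$, i.e.\ $\bigcup_{[a]\in\Delta}\mathbb{W}([a]|\Delta) = \mathbb{RP}^*$, since local finiteness of $\Delta$ guarantees a nearest point to every $[x]\in\mathbb{RP}^*$. The hypothesis forces $T_A(\mathbb{W}([a]|\Delta)) = \mathbb{W}(T_A[a]|T_A(\Delta))\subset \mathbb{RP}^*$ for each $[a]\in\Delta$, and taking the union gives $T_A(\mathbb{RP}^*)\subset \mathbb{RP}^*$. Now if $v\neq 0$, the point $[-1/v:1]\in\mathbb{RP}^*$ satisfies $T_A[-1/v:1] = [-a_{11}/v+a_{12}:0] = [1:0] = \infty$, where the first coordinate is nonzero because $\det A = a_{11}-a_{12}v\neq 0$. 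This contradicts $T_A(\mathbb{RP}^*)\subset \mathbb{RP}^*$, forcing $v=0$.

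The main obstacle is conceptual rather than computational: one must recognize that the hypothesis of part (2) is secretly a statement about where $T_A$ is allowed to send points of the affine chart, and then locate the unique point of $\mathbb{RP}^*$ that $T_A$ expels to infinity when $v\neq 0$. Once the ``escape point'' $[-1/v:1]$ is identified, the covering property of the Voronoi diagram closes the argument in one line; part (1) is then a routine similarity computation.
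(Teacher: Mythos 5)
Your proposal is correct and follows essentially the same route as the paper: part (1) via the observation that $T_A$ with $v=0$ acts on $\mathbb{RP}^*$ as an invertible similarity of ratio $|a_{11}|$, and part (2) by locating the point $[-1/v:1]$ that $T_A$ sends to $[1:0]\notin\mathbb{RP}^*$ and contradicting the covering property of the Voronoi diagram. The only cosmetic difference is that you take the union over all cells to get $T_A(\mathbb{RP}^*)\subset\mathbb{RP}^*$, while the paper works with the single cell containing $[-1/v:1]$; your explicit check that the image point is a legitimate element of $\mathbb{RP}^1$ (via $\det A\neq 0$) is a small bonus of care.
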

\begin{proof}
	\begin{enumerate}
		\item If $v=0$. Since $\det(A)\neq 0$, so $a_{11}\neq 0$ and for all $[x:1]\in\mathbb{RP}^*$, $T_A[x:1]=[A(x,1)]=[a_{11}x+a_{12}:1]\in\mathbb{RP}^*$. Therefore, $T_A$ restricted on $\mathbb{RP}^*$  is invertible. Then 
		\begin{align*}
			\mathbb{W}\big(T_A[a]\vert T_A(\Delta)\big)&=\left\{[x]\in\mathbb{RP}^*:d_{\mathbb{P}}\big([x],T_A[a]\big)=\min_{[b]\in T_A(\Delta)}d_{\mathbb{P}}\big([x],[b]\big)\right\}\\
			&=\left\{T_A[x]\in\mathbb{RP}^*:d_{\mathbb{P}}\big(T_A[x],T_A[a]\big)=\min_{[d]\in \Delta}d_{\mathbb{P}}\big(T_A[x],T_A[d]\big)\right\}.
		\end{align*}
		Now, if $[c]=[c:1]\in\mathbb{RP}^*$, then
		\begin{align*}
			d_{\mathbb{P}}\big(T_A[x],T_A[c]\big)&=d_{\mathbb{P}}\big([a_{11}x+a_{12}:1],[a_{11}c+a_{12}:1]\big)\\
			&=\vert a_{11}\vert \vert x-c\vert\\
			&=\vert a_{11}\vert d_{\mathbb{P}}\big([x],[c]\big).
		\end{align*} 
		Therefore,
		\begin{align*}
			\mathbb{W}\big(T_A[a]\vert T_A(\Delta)\big)&=\left\{T_A[x]\in\mathbb{RP}^*:\vert a_{11}\vert d_{\mathbb{P}}\big([x],[a]\big)=\min_{[d]\in \Delta}\vert a_{11}\vert d_{\mathbb{P}}\big([x],[d]\big)\right\}\\
			&=\left\{T_A[x]\in\mathbb{RP}^*: d_{\mathbb{P}}\big([x],[a]\big)=\min_{[d]\in \Delta} d_{\mathbb{P}}\big([x],[d]\big)\right\}\\
			&=T_A\big(\mathbb{W}\left([a]\vert \Delta\right)\big).
		\end{align*}
		\item  If possible let $v\neq 0$. Then $[-\frac{1}{v}:1]\in\mathbb{RP}^*$. Since $\mathbb{RP}^*= \bigcup_{[a]\in\Delta}\mathbb{W}\left([a]\vert \Delta\right)$, so, $[-\frac{1}{v}:1]\in \mathbb{W}\left([c]\vert \Delta\right)$  for some $[c]\in \Delta$. Therefore, $T_A[-\frac{1}{v}:1]=[-\frac{1}{v}a_{11}+a_{12}:0]\in T_A\big(\mathbb{W}\left([c]\vert \Delta\right)\big)$. Given that $T_A(\Delta)\subset\mathbb{RP}^*$, so $[a]\neq [-\frac{1}{v}:1]$ for all $[a]\in\Delta$. Therefore, $\mathbb{RP}^*= \bigcup_{[a]\in\Delta}\mathbb{W}\left(T_A[a]\vert T_A(\Delta)\right)$. Hence $[-\frac{1}{v}:1]\notin \mathbb{W}\left(T_A[c]\vert T_A(\Delta)\right)$. This gives a contradiction to $\mathbb{W}\big(T_A[c]\vert T_A(\Delta)\big)=T_A\big(\mathbb{W}\left([c]\vert \Delta\right)\big)$. Hence the proof.
	\end{enumerate}
\end{proof} 
\begin{definition}
	Let $\mu$ be the probability distribution on $\mathbb{RP}^*$ and $[x:1]$ be the random variable on $\mathbb{RP}^*$ with distribution $\mu$. Let 
	\begin{equation*}
		\mathscr{F}_n=\left\{f:\mathbb{RP}^*\to\mathbb{RP}^*~\mbox{such that $f$ is measurable map with $\vert f\big(\mathbb{RP}^*\big)\vert\leq n$}\right\}.
	\end{equation*}
	The elements of $\mathscr{F}_n$ are called $n$-quantizers. Let $r\geq 1$ and assume that 
	\begin{equation*}
		\int_{\mathbb{RP}^*}d_{\mathbb{P}}\big([x:1],[0:1]\big)^rd\mu[x]<\infty.
	\end{equation*}
	The $n$-th quantization error for $\mu$ of order $r$ is defined by
	\begin{equation*}
		\mathbb{V}_{n,r}(\mu)=\inf_{f\in\mathscr{F}_n}\int_{\mathbb{RP}^*}d_{\mathbb{P}}\big([x],f[x]\big)^rd\mu[x].
	\end{equation*}
	A quantizer $f\in\mathscr{F}_n$ is said to be $n$-optimal for $\mu$ of order $r$ if 
	\begin{equation*}
		\mathbb{V}_{n,r}(\mu)=\int_{\mathbb{RP}^*}d_{\mathbb{P}}\big([x],f[x]\big)^rd\mu[x].
	\end{equation*}
\end{definition}
The following lemma shows that the $n$-quantizer can be replaced by any finite set of cardinality less than or equal to $n$.
\begin{lemma}
	\begin{equation*}
		\mathbb{V}_{n,r}(\mu)=\displaystyle{\inf_{\mbox{\begin{tabular}{c} $\Delta\subset\mathbb{RP}^*$\\ 
						$\vert\Delta\vert<n$ \end{tabular}}}
			\int_{\mathbb{RP}^*}\min_{[a]\in\Delta}d_{\mathbb{P}}\big([x],[a]\big)^rd\mu[x]}.
	\end{equation*}
\end{lemma}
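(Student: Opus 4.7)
The plan is to prove equality by two opposing inequalities. The first is trivial from the definitions, and the second uses a measurable nearest-neighbor selection supplied by the Voronoi partition.

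First I would establish the inequality $\mathbb{V}_{n,r}(\mu) \le \inf_{\Delta} \int_{\mathbb{RP}^*} \min_{[a]\in\Delta} d_{\mathbb{P}}([x],[a])^r \, d\mu[x]$. Fix any finite $\Delta \subset \mathbb{RP}^*$ with $|\Delta| \le n$ and enumerate $\Delta = \{[a_1],\dots,[a_k]\}$. Using the Voronoi diagram $\{\mathbb{W}([a_i]\mid\Delta)\}_{i=1}^{k}$, I would define a Borel nearest-point map $f_\Delta : \mathbb{RP}^* \to \Delta$ by setting
\begin{equation*}
f_\Delta[x] = [a_i] \quad \text{whenever } [x] \in \mathbb{W}([a_i]\mid\Delta) \setminus \bigcup_{j<i} \mathbb{W}([a_j]\mid\Delta),
\end{equation*}
which breaks ties consistently and gives a measurable map with $|f_\Delta(\mathbb{RP}^*)| \le n$, hence $f_\Delta \in \mathscr{F}_n$. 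By construction $d_{\mathbb{P}}([x],f_\Delta[x]) = \min_{[a]\in\Delta} d_{\mathbb{P}}([x],[a])$ for every $[x]\in\mathbb{RP}^*$, and integrating against $\mu$ yields $\mathbb{V}_{n,r}(\mu) \le \int_{\mathbb{RP}^*} \min_{[a]\in\Delta} d_{\mathbb{P}}([x],[a])^r \, d\mu[x]$. Taking the infimum over $\Delta$ gives the required inequality.

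For the reverse direction, take any $f \in \mathscr{F}_n$ and set $\Delta_f := f(\mathbb{RP}^*) \subset \mathbb{RP}^*$, a finite set with $|\Delta_f| \le n$. Since $f[x] \in \Delta_f$ for every $[x]$, we have the pointwise bound
\begin{equation*}
d_{\mathbb{P}}([x],f[x])^r \ge \min_{[a]\in\Delta_f} d_{\mathbb{P}}([x],[a])^r,
\end{equation*}
so integrating gives $\int_{\mathbb{RP}^*} d_{\mathbb{P}}([x],f[x])^r\, d\mu[x] \ge \int_{\mathbb{RP}^*} \min_{[a]\in\Delta_f} d_{\mathbb{P}}([x],[a])^r\, d\mu[x] \ge \inf_{\Delta} \int_{\mathbb{RP}^*} \min_{[a]\in\Delta} d_{\mathbb{P}}([x],[a])^r\, d\mu[x]$. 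Passing to the infimum over $f\in\mathscr{F}_n$ finishes this side.

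The only nontrivial step is the measurability of $f_\Delta$; this is where the Voronoi regions are essential, and I would appeal to the fact (noted in the remark preceding the statement) that the collection of Voronoi regions is locally finite and each $\mathbb{W}([a_i]\mid\Delta)$ is a Borel subset of $\mathbb{RP}^*$ (being defined by finitely many inequalities in the continuous function $d_{\mathbb{P}}(\cdot,[a_j])$). Removing the overlap $\bigcup_{j<i}\mathbb{W}([a_j]\mid\Delta)$ preserves Borel measurability, so $f_\Delta^{-1}(\{[a_i]\})$ is Borel, completing the argument.
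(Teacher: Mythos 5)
Your proposal is correct and follows essentially the same route as the paper: one inequality by taking $\Delta = f(\mathbb{RP}^*)$ and bounding $d_{\mathbb{P}}([x],f[x])$ below by the minimum over $\Delta$, and the other by building an $n$-quantizer from the Voronoi partition of a given $\Delta$. Your explicit tie-breaking on overlapping Voronoi cells is in fact slightly more careful than the paper's definition $f=\sum_{[a]\in\Delta}[a]\chi_{\mathbb{A}_{[a]}}$, which is ambiguous on boundary points shared by two regions.
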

\begin{proof}
	Let $\Delta=f\big(\mathbb{RP}^*\big)$ for a fixed $f\in\mathscr{F}_n$. Then $\vert \Delta\vert\leq n$. For $[a]\in\Delta$, let 
	\begin{equation*}
		\mathbb{A}_{[a]}=\left\{[x]\in\mathbb{RP}^*:~f[x]=[a]\right\}.
	\end{equation*}
	Then $\mathbb{A}_{[a]}$ is non-empty and $\bigcup_{[a]\in\Delta}\mathbb{A}_{[a]}=\mathbb{RP}^*$.
	Now,
	\begin{align*}
		\int_{\mathbb{RP}^*}d_{\mathbb{P}}\big([x],f[x]\big)^rd\mu[x]&=\sum_{[a]\in\Delta}\int_{\mathbb{A}_{[a]}}d_{\mathbb{P}}\big([x],[a]\big)^rd\mu[x]\\
		&\geq \sum_{[a]\in\Delta}\int_{\mathbb{A}_{[a]}}\min_{[b]\in\Delta}d_{\mathbb{P}}\big([x],[b]\big)^rd\mu[x]\\
		&=\int_{\mathbb{RP}^*}\min_{[b]\in\Delta}d_{\mathbb{P}}\big([x],[b]\big)^rd\mu[x].
	\end{align*}
	Therefore, 
	\begin{equation}\label{exp1}
		\mathbb{V}_{n,r}(\mu)=\inf_{f\in\mathscr{F}_n}\int_{\mathbb{RP}^*}d_{\mathbb{P}}\big([x],f[x]\big)^rd\mu[x]\geq\displaystyle{\inf_{\mbox{\begin{tabular}{c} $\Delta\subset\mathbb{RP}^*$\\ 
						$\vert\Delta\vert<n$ \end{tabular}}}
			\int_{\mathbb{RP}^*}\min_{[a]\in\Delta}d_{\mathbb{P}}\big([x],[a]\big)^rd\mu[x]}.
	\end{equation}
	Conversely, let $\Delta\subset\mathbb{RP}^*$ with $\vert\Delta\vert\leq n$ and $\mathbb{A}_{[a]}=\mathbb{W}\left([a]\vert \Delta\right)$. For $[a]\in\Delta$, define $f:\mathbb{RP}^*\to\mathbb{RP}^*$ such that $f=\sum_{[a]\in\Delta}[a]\chi_{\mathbb{A}_{[a]}}$, where the summation is over $\oplus$ and $\chi_{\mathbb{A}_{[a]}}:\mathbb{RP}^*\to\mathbb{RP}^*$ is the characteristic function on $\mathbb{A}_{[a]}$. Then $f\in\mathscr{F}_n$ and
	\begin{align*}
		\int_{\mathbb{RP}^*}\min_{[a]\in\Delta}d_{\mathbb{P}}\big([x],[a]\big)^rd\mu[x]&=\sum_{[a]\in\Delta}\int_{\mathbb{A}_{[a]}}\min_{[a]\in\Delta}d_{\mathbb{P}}\big([x],[a]\big)^rd\mu[x].
	\end{align*}
	Sincc $[x]\in \mathbb{A}_{[a]}=\mathbb{W}\left([a]\vert \Delta\right)$. Therefore,
	\begin{align*}
		\int_{\mathbb{RP}^*}\min_{[a]\in\Delta}d_{\mathbb{P}}\big([x],[a]\big)^rd\mu[x]&=\sum_{[a]\in\Delta}\int_{\mathbb{A}_{[a]}}d_{\mathbb{P}}\big([x],[a]\big)^rd\mu[x]\\
		&=\int_{\mathbb{RP}^*}d_{\mathbb{P}}\big([x],f[x]\big)^rd\mu[x]\\
		&\geq\inf_{f\in\mathscr{F}_n}\int_{\mathbb{RP}^*}d_{\mathbb{P}}\big([x],f[x]\big)^rd\mu[x]=\mathbb{V}_{n,r}(\mu).
	\end{align*}
	So,
	\begin{equation}\label{expofdmj}
		\displaystyle{\inf_{\mbox{\begin{tabular}{c} $\Delta\subset\mathbb{RP}^*$\\ 
						$\vert\Delta\vert<n$ \end{tabular}}}
			\int_{\mathbb{RP}^*}\min_{[a]\in\Delta}d_{\mathbb{P}}\big([x],[a]\big)^rd\mu[x]}\geq \mathbb{V}_{n,r}(\mu).
	\end{equation}
	Hence from (\ref{exp1}) and (\ref{expofdmj}), we get the desired result.
\end{proof}
\begin{definition}
	A set $\Delta\subset\mathbb{RP}^*$ with $\vert\Delta\vert<n$ is said to be $n$-optimal set of centers for the probability distribution $\mu$ of order $r$ if 
	\begin{align*}
		\mathbb{V}_{n,r}(\mu)=
		\int_{\mathbb{RP}^*}\min_{[a]\in\Delta}d_{\mathbb{P}}\big([x],[a]\big)^rd\mu[x].
	\end{align*}
\end{definition}
Let $\mathscr{C}_{n,r}(\mu)$ be the collection of all $n$-optimal set of centers for $\mu$ of order $r$. Then the following result is concerning with the $n$-th quantization error and optimal set of the measure $\mu$ and push-forward measure $T_A\mu$ under certain projective transformation $T_A$.
\begin{theorem}
	Let $A=\begin{pmatrix}
		a_{11} & a_{12}\\
		v & 1 \\
	\end{pmatrix}\in GL(2,\mathbb{R})$ and  $T_A:\mathbb{RP}^1\to\mathbb{RP}^1$ be the corresponding projective transformation. Then the following holds:
	\begin{enumerate}
		\item If $v=0$, then	$\mathbb{V}_{n,r}(T_A\mu)=\vert a_{11}\vert^r~\mathbb{V}_{n,r}(\mu)$.
		\item  If $v=0$, then $\mathscr{C}_{n,r}(T_A\mu)=T_A\left(\mathscr{C}_{n,r}(\mu)\right)$, 
	\end{enumerate}	
	where $T_A\mu$ is the push-forward measure of $\mu$.
\end{theorem}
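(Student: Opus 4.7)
The plan is to reduce both claims to a single scaling identity for finite subsets $\Delta \subset \mathbb{RP}^*$ with $\vert\Delta\vert<n$, namely
\begin{equation*}
\int_{\mathbb{RP}^*} \min_{[b]\in T_A(\Delta)} d_{\mathbb{P}}([y], [b])^r \, d(T_A\mu)[y] = \vert a_{11}\vert^r \int_{\mathbb{RP}^*} \min_{[a]\in\Delta} d_{\mathbb{P}}([x], [a])^r \, d\mu[x].
\end{equation*}
First I would establish this identity by applying the change-of-variables formula $\int g \, d(T_A\mu) = \int g\circ T_A \, d\mu$ to the integrand $g([y]) = \min_{[b]\in T_A(\Delta)} d_{\mathbb{P}}([y], [b])^r$, and then invoking the metric scaling $d_{\mathbb{P}}(T_A[x], T_A[c]) = \vert a_{11}\vert \, d_{\mathbb{P}}([x], [c])$ that was derived inside the proof of Theorem~\ref{actptonvr} under the hypothesis $v=0$. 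Pulling the constant $\vert a_{11}\vert^r$ out of both the minimum and the integral then yields the identity.

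For part (1), I would take the infimum of this identity over all $\Delta \subset \mathbb{RP}^*$ with $\vert\Delta\vert<n$. Because $T_A$ restricted to $\mathbb{RP}^*$ is a bijection when $v=0$ (also observed in Theorem~\ref{actptonvr}), the assignment $\Delta \mapsto T_A(\Delta)$ is a bijection on finite subsets of $\mathbb{RP}^*$ of cardinality strictly less than $n$. Combined with the preceding lemma, which recasts $\mathbb{V}_{n,r}$ as exactly such an infimum, this yields $\mathbb{V}_{n,r}(T_A\mu) = \vert a_{11}\vert^r \mathbb{V}_{n,r}(\mu)$.

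For part (2), the same identity together with the relation from (1) shows that $\Delta$ realises the infimum defining $\mathbb{V}_{n,r}(\mu)$ if and only if $T_A(\Delta)$ realises the infimum defining $\mathbb{V}_{n,r}(T_A\mu)$. Transporting this equivalence through the bijection $\Delta \leftrightarrow T_A(\Delta)$ yields the set equality $\mathscr{C}_{n,r}(T_A\mu) = T_A(\mathscr{C}_{n,r}(\mu))$.

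The main obstacle is modest and lies in carefully justifying the interchange $\min_{[b]\in T_A(\Delta)} d_{\mathbb{P}}(T_A[x],[b])^r = \min_{[a]\in\Delta} d_{\mathbb{P}}(T_A[x], T_A[a])^r$ inside the change-of-variables step; this uses the injectivity of $T_A$ on $\mathbb{RP}^*$ so that no candidate point is lost when relabelling $[b] = T_A[a]$. Once this is in place, both parts are immediate consequences of the scaling identity.
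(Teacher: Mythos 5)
Your proposal is correct and follows essentially the same route as the paper: both rest on the change-of-variables formula for the push-forward measure combined with the metric scaling $d_{\mathbb{P}}(T_A[x],T_A[c])=\vert a_{11}\vert\, d_{\mathbb{P}}([x],[c])$ from the proof of Theorem~\ref{actptonvr}, applied to the infimum characterization of $\mathbb{V}_{n,r}$ over finite sets. The only cosmetic difference is that you obtain the equality in one step from the bijection $\Delta\mapsto T_A(\Delta)$ on finite subsets of $\mathbb{RP}^*$, whereas the paper derives the two inequalities separately by running the same computation for $T_A$ and then for $T_A^{-1}=T_{A^{-1}}$; the underlying argument is the same.
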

\begin{proof}
	\begin{enumerate}
		\item If $v=0$. Let $\Omega\subset T_A\big(\mathbb{RP}^*\big)$ such that $\vert\Omega\vert\leq n$. Since $v=0$, so, $T_A\vert_{\mathbb{RP}^*}$ is a non-singular map on $\mathbb{RP}^*$. Therefore, there is $\Delta$ in $\mathbb{RP}^*$ such that $T_A(\Delta)=\Omega$ and $\vert\Delta\vert=\vert\Omega\vert\leq n$. Then
		\begin{align*}
			\int_{T_A(\mathbb{RP}^*)}\min_{[a]\in\Omega}d_{\mathbb{P}}\big([y],[a]\big)^rd\left(\mu\circ T_A^{-1}\right)[y]&=\int_{\mathbb{RP}^*}\min_{[b]\in\Delta}d_{\mathbb{P}}\big(T_A[x],T_A[b]\big)^rd\mu[x]\\
			&=\vert a_{11}\vert^r\int_{\mathbb{RP}^*}\min_{[b]\in\Delta}d_{\mathbb{P}}\big([x],[b]\big)^rd\mu[x]\\
			&\geq \vert a_{11}\vert^r~\mathbb{V}_{n,r}(\mu).
		\end{align*}
		Therefore, taking infimum over $\Omega$ with $\vert\Omega\vert\leq n$, we get
		\begin{align}\label{qeudgsmg}
			\mathbb{V}_{n,r}(T_A\mu)\geq\vert a_{11}\vert^r~\mathbb{V}_{n,r}(\mu).
		\end{align}
		Now, it is an easy exercise to see that $T_A^{-1}=T_{A^{-1}}$, where $A^{-1}=\begin{pmatrix}
			\frac{1}{a_{11}} & -\frac{a_{12}}{a_{11}}\\
			0 & 1 \\
		\end{pmatrix}.$
		So, if we replace $T_A$ by $T_A^{-1}$ in the beginning and proceed as above then we get
		\begin{align*}
			\mathbb{V}_{n,r}(T^{-1}_A\mu)\geq \frac{1}{\vert a_{11}\vert^r}~\mathbb{V}_{n,r}(\mu).
		\end{align*}
		That is 
		\begin{align}\label{resqeudgsmg}
			\vert a_{11}\vert^r\mathbb{V}_{n,r}(\mu)\geq~\mathbb{V}_{n,r}(T_A\mu).
		\end{align}
		Combining (\ref{qeudgsmg}) and (\ref{resqeudgsmg}), result follows.
		\item Let $\Omega\subset T_A\big(\mathbb{RP}^*\big)$ such that $\Omega\in\mathscr{C}_{n,r}(T_A\mu)$. Since $T_A$ is invertible and $v=0$, so, there is $\Delta$ in $\mathbb{RP}^*$ such that $\Omega=T_A(\Delta)$ and $\vert\Delta\vert=\vert\Omega\vert\leq n$. We claim that $\Delta$ is also an $n$-optimal set of the centers for $\mu$. Since $\Omega\in\mathscr{C}_{n,r}(T_A\mu)$, therefore, 
		\begin{align}\label{pusfrd}
			\nonumber	\mathbb{V}_{n,r}(T_A\mu)&=\int_{T_A(\mathbb{RP}^*)}\min_{[a]\in\Omega}d_{\mathbb{P}}\big([y],[a]\big)^rd\left(\mu\circ T_A^{-1}\right)[y]\\
			\nonumber	&=\int_{\mathbb{RP}^*}\min_{[b]\in\Delta}d_{\mathbb{P}}\big(T_A[x],T_A[b]\big)^rd\mu[x]\\
			&=\vert a_{11}\vert^r\int_{\mathbb{RP}^*}\min_{[b]\in\Delta}d_{\mathbb{P}}\big([x],[b]\big)^rd\mu[x].
		\end{align}
		Since $a_{11}\neq 0$, and from the above we have $\mathbb{V}_{n,r}(T_A\mu)=\vert a_{11}\vert^r~\mathbb{V}_{n,r}(\mu)$. Therefore from (\ref{pusfrd}), it follows that
		\begin{align*}
			\mathbb{V}_{n,r}(\mu)=\int_{\mathbb{RP}^*}\min_{[b]\in\Delta}d_{\mathbb{P}}\big([x],[b]\big)^rd\mu[x].
		\end{align*}
		This proves our claim. Hence $\Delta\in\mathscr{C}_{n,r}(\mu)$. Therefore, $\Omega=T_A(\Delta)\in T_A\left(\mathscr{C}_{n,r}(\mu)\right)$. This implies that $\mathscr{C}_{n,r}(T_A\mu)\subseteq T_A\left(\mathscr{C}_{n,r}(\mu)\right)$. To prove the converse part, we replace $T_A$ by $T_A^{-1}$ in the beginning and proceed as above. This completes the proof.
	\end{enumerate}
\end{proof}

To estimate the upper bound of the $n$-th quantization error of a probability measure, we recall the RPIFS $\mathscr{W}_{\mathcal{P}}=\left\{\mathbb{RP}^1; w_{A_1},w_{A_2}\right\}$ and the associated invariant probability measure $P$ defined as above. Let $C_{\overline{ab}}$, be the cone generated by the points $[a]=[-1:1]$ and $[b]=[1:1]$. For $\omega=(\omega_1,\omega_2,\ldots,\omega_l)\in\left\{1,2\right\}^l$, define $w_{A_{\omega}}=w_{A_{\omega_1}}\circ w_{A_{\omega_2}}\circ\cdots\circ w_{A_{\omega_l}}$. Let $C_{\omega}=w_{A_{\omega}}(C_{\overline{ab}})$ and $[c]$, $[d]$ be the boundary points of the cone $C_{\omega}$. Let $m_{\omega}\in C_{\omega}$ be the point which lies in the equal distance from the points $[c]$ and $[d]$ concerning the metric $d_{\mathbb{P}}$. That is the mid-point of the cone $C_{\omega}$. The collection $\left(C_{\omega}\right)_{\omega\in\left\{1,2\right\}^l}$ consists $2^l$ number of cones in the $l$-th level of the construction of Cantor-like set on $\mathbb{RP}^1$. The cones $C_{\omega1^*}$, $C_{\omega2^*}$ are said to be children of $C_{\omega}$ into which $C_{\omega}$ is split up at $(l+1)$-th level. Then the attractor $F_{\mathbb{P}}=\displaystyle{\bigcap_{l\in\mathbb{N}}\bigcup_{\omega\in\left\{1,2\right\}^l}C_{\omega}}$ is the Cantor-like set on $\mathbb{RP}^1$.\\

Let $n\in\mathbb{N}$ with $n\geq 1$, and $k(n)\in\mathbb{N}$ be the unique number such that $2^{k(n)}\leq n< 2^{k(n)+1}$. Let $\mathbb{I}\subset \left\{1,2\right\}^{k(n)}$ with $card(\mathbb{I})=n-2^{k(n)}$, and let $\Delta_n$ be the set consisting all points $m_{\omega}$ of the cone $C_{\omega}$ with $\omega\in\left\{1,2\right\}^{k(n)}\setminus \mathbb{I}$ and all mid-points $m_{\omega1^*}$, $m_{\omega2^*}$ of the children of $C_{\omega}$ with $\omega\in\mathbb{I}$. That is 
\begin{equation*}
	\Delta_n=\left\{m_{\omega}:\omega\in\left\{1,2\right\}^{k(n)}\setminus \mathbb{I}\right\}\cup\left\{m_{\omega1^*}:\omega\in\mathbb{I}\right\}\cup\left\{m_{\omega2^*}:\omega\in\mathbb{I}\right\}.
\end{equation*}
Let $D_n=\frac{1}{2}\frac{1}{18^{k(n)}}\left[2^{k(n)+1}-n+\frac{1}{9}(n-2^{k(n)})\right]$. Then we get the following:
\begin{theorem}\label{qnlman}
	The upper bound of the $n$-th quantization error for $P$ of order $2$ is  $D_n.$
\end{theorem}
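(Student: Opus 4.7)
The plan is to verify $|\Delta_n|=n$ and then apply the lemma above, so that $\mathbb{V}_{n,2}(P)\le \int_{\mathbb{RP}^*}\min_{[a]\in\Delta_n}d_{\mathbb{P}}([x],[a])^{2}\,dP[x]$. Indeed, the first block of $\Delta_n$ contributes $2^{k(n)}-|\mathbb{I}|$ points and the second contributes $2|\mathbb{I}|$, summing to $2^{k(n)}+(n-2^{k(n)})=n$. I would then decompose the integral according to the partition $F_{\mathcal{P}}=\bigsqcup_{\omega\in\{1,2\}^{k(n)}}(F_{\mathcal{P}}\cap C_\omega)$ (the cones meet only on a $P$-null set). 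For $\omega\notin\mathbb{I}$ the centre $m_\omega\in\Delta_n$ is used directly on $C_\omega$, while for $\omega\in\mathbb{I}$ we use $m_{\omega 1^*}$ on $C_{\omega 1^*}$ and $m_{\omega 2^*}$ on $C_{\omega 2^*}$; in both cases the upper bound arises from replacing $\min_{[a]\in\Delta_n}$ by the nearest available centre.

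Next I would use the self-similarity of $P$ (Theorem \ref{methe}) iterated $l$ times, which yields $P=\sum_{\omega\in\{1,2\}^{l}} 2^{-l}(w_{A_\omega})_{*}P$, and the fact that $(w_{A_\omega})_{*}P$ is supported in $C_\omega$. On the affine chart $\mathbb{RP}^{*}\cong\mathbb{R}$ each map $w_{A_i}$ acts as $y\mapsto (y\mp 2)/3$, so $w_{A_\omega}$ acts as an affine contraction of ratio $3^{-l}$ whose image of $[0:1]$ is exactly the midpoint $m_\omega$. Combining this with the isometric identity $d_{\mathbb{P}}(w_{A_\omega}[y],w_{A_\omega}[z])=3^{-l}|y-z|$ gives, by the change-of-variables formula,
\begin{equation*}
\int_{C_\omega}d_{\mathbb{P}}([x],m_\omega)^{2}\,dP[x]=\frac{1}{2^{l}}\cdot\frac{1}{9^{l}}\int_{-1}^{1} y^{2}\,dP[y]=\frac{M_2}{18^{l}},
\end{equation*}
where $M_2:=\int y^{2}\,dP[y]$ is the second moment of $P$.

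To evaluate $M_2$, I would apply \eqref{pequio} to the test function $y^{2}$, obtaining
\begin{equation*}
M_2=\tfrac{1}{2}\int\bigl(\tfrac{y-2}{3}\bigr)^{2}dP[y]+\tfrac{1}{2}\int\bigl(\tfrac{y+2}{3}\bigr)^{2}dP[y]=\tfrac{1}{9}M_2+\tfrac{4}{9},
\end{equation*}
hence $M_2=\tfrac12$. Assembling the pieces: the $2^{k(n)}-|\mathbb{I}|$ cones with $\omega\notin\mathbb{I}$ contribute $M_2/18^{k(n)}$ each, and every $\omega\in\mathbb{I}$ contributes $2\cdot M_2/18^{k(n)+1}=M_2/(9\cdot 18^{k(n)})$ from its two children, giving
\begin{equation*}
\mathbb{V}_{n,2}(P)\le\frac{M_2}{18^{k(n)}}\Bigl[(2^{k(n)}-|\mathbb{I}|)+\tfrac{1}{9}|\mathbb{I}|\Bigr]=\frac{1}{2\cdot 18^{k(n)}}\Bigl[2^{k(n)+1}-n+\tfrac{1}{9}(n-2^{k(n)})\Bigr]=D_n.
\end{equation*}

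No single step looks conceptually hard; the main care is bookkeeping. The one subtle point I would pay attention to is the rigorous justification that $P\lvert_{C_\omega}=2^{-l}(w_{A_\omega})_{*}P$ on each cylinder (so that the cone-by-cone decomposition is exact), which follows from the strong irreducibility already used in Theorem~\ref{methe} and the fact that the boundary points of the cones form a $P$-null set because the Bernoulli representation of Theorem~\ref{hutc1981} places no mass on countable sets.
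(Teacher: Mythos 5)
Your proposal is correct and follows essentially the same route as the paper: the same cone-by-cone decomposition of $\int_{\mathbb{RP}^*}\min_{[a]\in\Delta_n}d_{\mathbb{P}}([x],[a])^{2}\,dP[x]$, the same scaling identity $\int_{C_\omega}d_{\mathbb{P}}([x],[m_\omega])^{2}\,dP[x]=18^{-l}\int d_{\mathbb{P}}([x:1],[0:1])^{2}\,dP[x]$ obtained from the iterated self-similarity of $P$ (the paper's Lemma~\ref{upmrs}), the same second-moment computation $M_2=\tfrac12$ (the paper's Lemma~\ref{distcpot}), and the same final count yielding $D_n$. The only additions are your explicit verification that $\vert\Delta_n\vert=n$ and the remark about the cone boundaries being $P$-null, both of which the paper leaves implicit.
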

To prove the above theorem, first, we prove the following lemmas.
\begin{lemma}\label{upmrs}
	If $S:\mathbb{RP}^*\to\mathbb{R}_{+}$ is  Borel measurable and $k\in\mathbb{N}$, then 
	\begin{equation*}
		\int S[x] dP[x]=\frac{1}{2^l}\sum_{{\bf i}\in\{1,2\}^l}\int S\circ w_{A_{\bf i}}[x]dP[x].
	\end{equation*}
\end{lemma}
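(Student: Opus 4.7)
The plan is to deduce the stated identity directly from the self-similarity relation~(\ref{pequio}) provided by Theorem~\ref{methe}, by induction on $l$. The engine is the standard change-of-variables formula for push-forward measures: for any nonnegative Borel measurable $S$ and $i\in\{1,2\}$,
\begin{equation*}
\int S\, d\bigl(P\circ w_{A_i}^{-1}\bigr) = \int S\circ w_{A_i}\, dP.
\end{equation*}

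For the base case $l=1$, I would integrate $S$ against both sides of
\begin{equation*}
P = \tfrac{1}{2}\, P\circ w_{A_1}^{-1} + \tfrac{1}{2}\, P\circ w_{A_2}^{-1},
\end{equation*}
and apply the push-forward identity above to obtain
\begin{equation*}
\int S\, dP = \frac{1}{2}\sum_{i\in\{1,2\}}\int S\circ w_{A_i}\, dP,
\end{equation*}
which is the $l=1$ version of the claim.

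For the inductive step, assuming the identity at level $l$, I would apply the base case to each nonnegative measurable function $S\circ w_{A_{\mathbf{i}}}$, $\mathbf{i}\in\{1,2\}^l$, yielding
\begin{equation*}
\int S\circ w_{A_{\mathbf{i}}}\, dP = \frac{1}{2}\sum_{j\in\{1,2\}} \int S\circ w_{A_{\mathbf{i}}}\circ w_{A_j}\, dP.
\end{equation*}
Since projective transformations compose as $w_{A_{\mathbf{i}}}\circ w_{A_j} = w_{A_{\mathbf{i}}A_j} = w_{A_{\mathbf{i}j}}$ for the concatenated index $\mathbf{i}j\in\{1,2\}^{l+1}$, substituting into the inductive hypothesis collapses the double sum into a single sum over $\{1,2\}^{l+1}$, giving the level-$(l+1)$ identity.

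I do not anticipate any serious obstacle; the argument is a clean induction on top of the invariance equation~(\ref{pequio}). The only subtle point is that the change-of-variables step must be applied termwise, which is unproblematic because $S:\mathbb{RP}^*\to\mathbb{R}_{+}$ is nonnegative, so no integrability hypothesis is needed and the rearrangements of sums and integrals are justified by Tonelli.
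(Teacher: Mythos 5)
Your proposal is correct and follows essentially the same route as the paper: the paper also integrates $S$ against the invariance relation~(\ref{pequio}), applies the push-forward change of variables to get the $l=1$ case, and then invokes ``repeated applications'' of~(\ref{pequio}) for general $l$. Your version merely makes that iteration explicit as an induction, which is a fair (and slightly more careful) rendering of the same argument.
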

\begin{proof}
	From (\ref{pequio}),
	\begin{align}
		\nonumber    \int S[x] dP[x]&=\int S[x] d\big(\frac{1}{2}P\circ w_{A_1}^{-1}+\frac{1}{2}P\circ w_{A_2}^{-1}\big)[x]\\
		\nonumber    &=\frac{1}{2}\sum_{{\bf i}\in\{1,2\}}\int S[x]dP\circ w^{-1}_{A_{\bf i}}[x]\\
		\label{lastoneryjhd}   &=\frac{1}{2}\sum_{{\bf i}\in\{1,2\}}\int S\circ w_{A_{\bf i}}[x]dP[x].
	\end{align}
	Now, by repeated applications of the equation (\ref{pequio}) in (\ref{lastoneryjhd}), we get the required result.
\end{proof}	
\begin{lemma}\label{distcpot}
	For  $[0:1]\in\mathbb{RP}^*$,
	\begin{align*}
		\int d_{\mathbb{P}}\big([x:1],[0:1]\big)^2dP[x]=\frac{1}{2}.
	\end{align*}
\end{lemma}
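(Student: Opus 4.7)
The strategy is to exploit the self-similarity relation~(\ref{pequio}) for $P$ and turn the integral into a linear fixed-point equation.

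First, observe that by the definition of the projective metric, $d_{\mathbb{P}}([x:1],[0:1])^2 = x^2$, so the quantity to be computed is $I := \int x^2\, dP[x]$. Before anything else, I would note that the attractor $F_{\mathcal{P}}$ lies inside the cone $C_{\overline{ab}}$ generated by $[-1:1]$ and $[1:1]$, and $P$ is supported on $F_{\mathcal{P}}$; hence $|x|\leq 1$ on $\operatorname{supp}(P)$ and $I\leq 1$ is automatically finite, so there is no integrability issue to worry about.

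Next, I would compute the projective action of $w_{A_1}$ and $w_{A_2}$ in the affine chart $\mathbb{RP}^*$. A direct matrix computation gives
\begin{equation*}
w_{A_1}[x:1]=\Bigl[\tfrac{x-2}{3}:1\Bigr],\qquad w_{A_2}[x:1]=\Bigl[\tfrac{x+2}{3}:1\Bigr].
\end{equation*}
Then I apply Lemma~\ref{upmrs} with $l=1$ (equivalently, invariance~(\ref{pequio})) to the Borel function $S[x:1]=x^2$:
\begin{equation*}
I=\tfrac{1}{2}\int\Bigl(\tfrac{x-2}{3}\Bigr)^{2}dP[x]+\tfrac{1}{2}\int\Bigl(\tfrac{x+2}{3}\Bigr)^{2}dP[x].
\end{equation*}

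Expanding $(x-2)^2+(x+2)^2=2x^2+8$ collapses the right-hand side to $\tfrac{1}{9}(I+4)$, giving the linear equation $I=\tfrac{I}{9}+\tfrac{4}{9}$, whose unique solution is $I=\tfrac{1}{2}$. No step is really an obstacle here; the only thing that requires care is making sure the affine-coordinate formula for $w_{A_i}$ is correct (the factor of $1/3$, not $1/\sqrt{3}$, arises because the projective class is taken), so that the self-similarity equation has the right coefficients. Everything else is a one-line computation once the invariance lemma is invoked.
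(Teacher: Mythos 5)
Your proof is correct and follows essentially the same route as the paper: apply the invariance relation~(\ref{pequio}) to $S[x:1]=x^2$, use the affine formulas $w_{A_i}[x:1]=[\tfrac{x\mp 2}{3}:1]$, and solve the resulting fixed-point equation $I=\tfrac{1}{9}I+\tfrac{4}{9}$. The only addition is your (harmless and in fact welcome) remark on integrability, which the paper leaves implicit.
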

\begin{proof}
	For $[0:1]\in\mathbb{RP}^*$, using lemma~\ref{upmrs}
	\begin{align*}
		\int d_{\mathbb{P}}\big([x:1],[0:1]\big)^2dP[x]&=\frac{1}{2}\int d_{\mathbb{P}}\big(w_{A_{1}}[x:1],[0:1]\big)^2dP[x]+\frac{1}{2}\int d_{\mathbb{P}}\big(w_{A_{2}}[x:1],[0:1]\big)^2dP[x]\\
		&=\frac{1}{2}\int d_{\mathbb{P}}\big([\frac{x}{3}-\frac{2}{3}:1],[0:1]\big)^2dP[x]+\frac{1}{2}\int d_{\mathbb{P}}\big([\frac{x}{3}+\frac{2}{3}:1],[0:1]\big)^2dP[x]\\
		&=\frac{1}{2}\left[\int \left(\frac{x}{3}-\frac{2}{3}\right)^2dP[x]+\int \left(\frac{x}{3}+\frac{2}{3}\right)^2dP[x]\right]\\
		&=\frac{1}{2}\left[2\int \frac{x^2}{9}dP[x]+2\int \frac{4}{9}dP[x]\right]\\
		&=\frac{1}{9}\int x^2dP[x]+\frac{4}{9}\\
		&=\frac{1}{9}\int d_{\mathbb{P}}\big([x:1],[0:1]\big)^2dP[x]+\frac{4}{9}.
	\end{align*}
	Therefore, 
	\begin{equation*}
		\int d_{\mathbb{P}}\big([x:1],[0:1]\big)^2dP[x]=\frac{1}{2}.
	\end{equation*}
\end{proof}
\begin{proof}[Proof of  Theorem~\ref{qnlman}]
	\begin{align*}
		\int_{\mathbb{RP}^*}\min_{[a]\in\Delta_n}d_{\mathbb{P}}\big([x],[a]\big)^2dP[x]=&\sum_{\omega\in\{1,2\}^{k(n)}\setminus\mathbb{I}}\int_{C_{\omega}}d_{\mathbb{P}}\big([x],[a]\big)^2dP[x]+\sum_{\omega\in\mathbb{I}}\int_{C_{\omega1^*}}d_{\mathbb{P}}\big([x],[a]\big)^2dP[x]\\
		&+\sum_{\omega\in\mathbb{I}}\int_{C_{\omega2^*}}d_{\mathbb{P}}\big([x],[a]\big)^2dP[x].
	\end{align*}
	For every $l\in\mathbb{N}$ and for every $\sigma\in\left\{1,2\right\}^{l}$, from Lemma~\ref{upmrs} and Lemma~\ref{distcpot}, we get
	\begin{align*}
		\int_{C_\sigma}d_{\mathbb{P}}\big([x],[m_{\sigma}]\big)^2dP[x]&=\frac{1}{2^l}\int d_{\mathbb{P}}\big(w_{A_{\sigma}}[x],[m_{\sigma}]\big)^2dP[x]\\
		&=\frac{1}{2^l}\int d_{\mathbb{P}}\big(w_{A_{\sigma}}[x:1],w_{A_{\sigma}}[0:1]\big)^2dP[x].
	\end{align*} 
	Now, $w_{A_{\sigma}}=\begin{pmatrix}
		\frac{1}{3^l} & * \\
		0 & 1
	\end{pmatrix}.$ Therefore, $d_{\mathbb{P}}\big(w_{A_{\sigma}}[x:1],w_{A_{\sigma}}[0:1]\big)^2=\left(\frac{1}{3^l}\right)^2d_{\mathbb{P}}\big([x:1],[0:1]\big)^2$. So,
	\begin{align*}
		\int_{C_\sigma}d_{\mathbb{P}}\big([x],[m_{\sigma}]\big)^2dP[x]&=\frac{1}{18^l}\int d_{\mathbb{P}}\big([x:1],[0:1]\big)^2dP[x]\\                                   &=\frac{1}{2}\frac{1}{18^l}.
	\end{align*}
	Since $\omega1^*$ and $\omega2^*$ are the elements of $\left\{1,2\right\}^{k(n)+1}$, therefore,
	\begin{align*}
		\int_{\mathbb{RP}^*}\min_{[a]\in\Delta_n}d_{\mathbb{P}}\big([x],[a]\big)^2dP[x]=&\frac{1}{2}\frac{1}{18^{k(n)}}\sum_{\omega\in\{1,2\}^{k(n)}\setminus\mathbb{I}}\int_{C_{\omega}}dP[x]+\frac{1}{2}\frac{1}{18^{k(n)+1}}\sum_{\omega\in\mathbb{I}}\int_{C_{\omega1^*}}dP[x]\\
		&+\frac{1}{2}\frac{1}{18^{k(n)+1}}\sum_{\omega\in\mathbb{I}}\int_{C_{\omega2^*}}dP[x]\\
		=&\frac{1}{2}\frac{1}{18^{k(n)}}\left[card\big(\{1,2\}^{k(n)}\setminus\mathbb{I}\big)+\frac{1}{9}card\big(\mathbb{I}\big)\right]\\
		=&\frac{1}{2}\frac{1}{18^{k(n)}}\left[2^{k(n)}-(n-2^{k(n)})+\frac{1}{9}(n-2^{k(n)})\right]\\
		=&\frac{1}{2}\frac{1}{18^{k(n)}}\left[2^{k(n)+1}-n+\frac{1}{9}(n-2^{k(n)})\right]=D_n.
	\end{align*}
	Hence 
	\begin{align*}
		\mathbb{V}_{n,r}(P)\leq D_n.
	\end{align*}
	This completes the proof.
\end{proof}

\section*{Conclusion}
In this article, an Iterated Function System is considered on the real projective line $\mathbb{RP}^1$ so that the attractor is a Cantor-like set on it, and we estimated the Hausdorff dimension of such an attractor. This involves analyzing the geometric and fractal properties of the attractor, providing a detailed understanding of its dimensional characteristics. Then, we have shown the existence of a probability measure on $\mathbb{RP}^1$ associated with this IFS and estimated the upper bound of the $n$-th quantization error for this measure. This quantization error gives insight into the efficiency of representing the probability measure with a finite set of points, which has applications in data compression and signal processing. In the future, one may find the optimal set and estimate the quantization dimension of the Cantor-like set in 
$\mathbb{RP}^1$. This future work would involve finding the set of points that minimize the quantization error and determining the quantization dimension, which would provide a deeper understanding of the fine structure of the attractor and its representation in 
$\mathbb{RP}^1$. This work contributes to understanding quantization theory on the non-Euclidean projective spaces.

	\bibliographystyle{siamplain}
	\bibliography{QuantizationProjectiveLine}	
\end{document}